\numberwithin{equation}{section}
\newcommand\keywords[1]{\textbf{Keywords:} #1}
\newtheorem*{theorem*}{Theorem}
\newtheorem{theorem}{Theorem}[section]
\newtheorem{lemma}{Lemma}[section]
\newtheorem{assumption}{Assumption}[section]
\newtheorem{remark}{Remark}[section]
\begin{document}

\title{Convergence of Random Batch Method with replacement for interacting particle systems}
\author[a]{Zhenhao Cai\thanks{E-mail:caizhenhao@pku.edu.cn}}
\author[b]{Jian-Guo Liu\thanks{E-mail:jliu@math.duke.edu}}
\author[c]{Yuliang Wang\thanks{Email:YuliangWang$\_$math@sjtu.edu.cn}}
\affil[a]{School of Mathematical Sciences, Peking University, Beijing, 100871, P.R.China.}
\affil[b]{Department of Mathematics, Department of Physics, Duke University, Durham, NC 27708, USA.}
\affil[c]{School of Mathematical Sciences, Institute of Natural Sciences, Shanghai Jiao Tong University, Shanghai, 200240, P.R.China.}

\date{}
\maketitle

\begin{abstract}
The Random Batch Method (RBM) proposed in [Jin et al. J Comput Phys, 2020] is an efficient algorithm for simulating interacting particle systems (IPS). In this paper, we investigate the Random Batch Method with replacement (RBM-r), which is the same as the kinetic Monte Carlo (KMC) method for the pairwise interacting particle system of size $N$. In the RBM-r algorithm, one randomly picks a small batch of size $p \ll N$, and only the particles in the picked batch interact among each other within the batch for a short time, where the weak interaction (of strength $\frac{1}{N-1}$) in the original system is replaced by a strong interaction (of strength $\frac{1}{p-1}$). Then one repeats this pick-interact process.
This KMC algorithm dramatically reduces the computational cost from $O(N^2)$ to $O(pN)$ per time step, and provides an unbiased approximation of the original force/velocity field of the interacting particle system.
We give a rigorous proof of this approximation with an explicit convergence rate. In detail, we show that the Wasserstein-2 distance between first marginal distributions of IPS and RBM-r has an $O(\kappa^{1/4})$ upper bound, where $\kappa$ is the time step for choosing the random batch and the bound is independent of $N$. An improved $O(\kappa^{1/2})$ rate is also obtained when there is no diffusion in the system. Notably, the techniques in our analysis can potentially be applied to study KMC for other systems, including the stochastic Ising spin system.

\end{abstract}

\keywords{interacting particle system, Random Batch Method with replacement, kinetic Monte Carlo, random time change}

\maketitle

\section{Introduction}\label{sec:intro}

Many systems in natural and social sciences including molecules in fluids, plasma, swarming, chemotaxis, flocking, synchronization, consensus, random vortex model, can be effectively modeled by $N$ indistinguishable individuals called particles. The particles interact with each other according to some interaction force. For the pairwise interacting case, direct evaluation of the interaction requires complexity of $O(N^2)$ per time step. The Random Batch Method (RBM) proposed in \cite{jin2020random}, as a plausible approximation of the interacting particle system, has a much cheaper computational cost $O(pN)$, where $p \ll N$ represents the batch size, as explained below. Meanwhile, the proposed RBM methods can be proved to converge to the original interacting particle system (IPS), in the sense of distributions (see Theorem 3.1 in \cite{jin2020random} and Theorem \ref{eq:thm:mainthm} in this paper). The authors of \cite{jin2020random} proposed two versions of RBM: RBM without replacement (RBM-1) and RBM with replacement (RBM-r). In RBM-1, the index set $\{1, \dots, N\}$ is randomly divided into $\tfrac{N}{p}$ batches, each with size $p$. The particles then pairwisely interact with each other only within their own batches for a short time and then one randomly reshuffles and continues. In RBM-r, one randomly picks a batch of size $p$, and only the particles in the picked batch interact among each other within the batch for a short time, with the interaction strength increased from $\frac{1}{N-1}$ to $\frac{1}{p-1}$. Then one repeats this pick-interact process. This pick-interact algorithm (or RBM-r) is exactly the kinetic Monte Carlo (KMC) method \cite{bortz1975new, voter2007introduction} for the first-order pairwise interacting particle system. The main purpose of this paper is to study the convergence of RBM-r via a sequence of probabilistic tools. The method we developed here shall also be able to study other KMC methods in statistical physics \cite{bortz1975new, binder1973scaling, voter2007introduction}.


We begin with a brief description of the interacting particle systems and the main idea of the Random Batch Method. Consider the following first-order pairwise interacting particle system in $\mathbb{R}^d$, described by a stochastic differential equation (SDE) system in It\^o's sense:
\begin{equation}\label{eq:IPSintro0}
dX^i = -\nabla V(X^i) dt + \frac{1}{N-1}\sum_{j\neq i}K\left(X^j - X^i \right) dt + \sigma dB^i,\quad X^i|_{t=0} = X_0^i, \quad 1\leq i \leq N,
\end{equation}
where $X^i \in \mathbb{R}^d$ ($1\leq i \leq N$), $V: \mathbb{R}^d \rightarrow \mathbb{R}$ is the external potential, $K: \mathbb{R}^d \rightarrow \mathbb{R}^d$ is the interaction kernel, $B^i$ ($1 \leq i \leq N$) are independent standard Brownian motions in $\mathbb{R}^d$, and $\sigma \geq 0$ is the constant volatility. Note that the $\frac{1}{N-1}$ factor for each pairwise interaction force in $\eqref{eq:IPSintro0}$ is the kinetic scale. This force $\frac{1}{N-1}K(X^j - X^i)$ is a weak interaction since usually $N$ is a large number. As we would see later, in both versions of the Random Batch Method, one replaces the weak interaction force $\frac{1}{N-1}K(X^j - X^i)$ between all pairs $(X^j,X^i)$ with some strong interaction force $\frac{1}{p-1}K(X^j - X^i)$ within pairs selected in a batch with size $p$ (usually $p$ is a small number). In other words, the main modification in the RBM is replacing  $\frac{1}{N-1}\sum_{j\neq i}K\left(X^j - X^i \right)$ (the total force acting on $X^i$) with $\frac{1}{p-1}\sum_{j\in \mathcal{C},j\neq i}K\left(X^j - X^i \right)$, where $\mathcal{C} \subset \{1,\dots,N\}$ is a random batch of size $p$. 
This replacement in the algorithm dramatically reduces the computational cost from $O(N^2)$ to $O(pN)$ per time step, and provides an unbiased approximation for the force / velocity field of the original interacting particle system.
i.e. for any fixed deterministic sequence $(x^i)_{i=1}^N$, 
\begin{equation}\label{eq:unbiased}
\mathbb{E}\big[\frac{1}{p-1}\sum_{j\in \mathcal{C},j\neq i}K\left(x^j - x^i \right) \mid i \in \mathcal{C}\big] = \frac{1}{N-1}\sum_{j\neq i}K\left(x^j -x^i \right),
\end{equation}
where the randomness comes from the random batch $\mathcal{C}$. This property \eqref{eq:unbiased} is crucial in our analysis, in particular, in proving Lemma \ref{eq:deltakai} below. This consistency property \eqref{eq:unbiased} indicates that the modified dynamics in both RBM-1 and RBM-r should be close to the original interacting particle system.

As a further remark, the applicability of RBM is not limited to the system \eqref{eq:IPSintro0}. For instance, the particle interaction here in \eqref{eq:IPSintro0} is of the binary form, which is actually more important and applicable compared with other forms like Kac interaction in spin glasses \cite{frohlich1987some, franz2004finite}. Typical examples of such binary interactions include Coulomb's interaction, and influence of opinions in the stochastic opinion dynamics \cite{motsch2014heterophilious}. However, as discussed in \cite{jin2020random}, RBM can also be applied to non-binary interacting particle systems. Furthermore, in \eqref{eq:IPSintro0}, the particles we consider are indistinguishable, but RBM can actually be applied to systems with unequal weights, charges or volatility \cite{jin2021convergence}. Moreover, we have assumed the constant volatility $\sigma$ for simplicity to illustrate our methodology and analysis. Actually, RBM can also be applied to systems with degenerate or multiplicative noise and has similar analysis. It can also be extended to jumping processes like the stochastic Ising model \cite{bortz1975new}. 

Next, we describe the motivations and formulations of RBM-1 and RBM-r. Consider the direct simulation of the interacting particle system \eqref{eq:IPSintro0}. Take time step $\kappa>0$. Suppose we compute up to time $T$ with a total of $n_T := T/\kappa$ time intervals, then clearly the total complexity is $O(n_T N^2)$, which is very expensive for large $N$. Inspired by the ``mini-batch" idea in the celebrated Stochastic Gradient Descent (SGD) algorithm in machine learning \cite{robbins1951stochastic}, the authors of \cite{jin2020random} proposed the Random Batch Method (RBM) to reduce computational cost. As we will see below, the computational cost of RBM is only $O(n_T pN)$, where the batch size $p$ is usually a small number and often equals 2. This $O(N)$ complexity can be compared with the well-known fast multipole method (FMM) \cite{greengard1987fast}, which has the computational cost of the same order and is more accurate than RBM. However, RBM is simpler to implement and is valid for systems with more general potentials that do not have the decay property, unlike the Coulomb's potential required by FMM. To be more specific, the key features of FMM are more complicated, mainly including the hierarchical subdivision of space, the far-field expansion of the kernel in which the influence of source and evaluation points separates, and the conversion of far field expansions into local expansions. As we would see in the RBM algorithms described below, the implementation of RBM is much simpler and does not have much restrictions on the interaction kernel and the computational domain.
The idea of RBM is to use random batch (of size $p$) on the summation of the interaction force in \eqref{eq:IPSintro0}, as discussed near \eqref{eq:unbiased}. 
After picking time step $\kappa > 0$, one natural question is how we in practice choose the random batch $\mathcal{C}_k$ at each time grid $t_k := k\kappa$ ($k = 0,1,2,\dots$). The authors of \cite{jin2020random} proposed two approaches: the Random Batch Method without replacement (RBM-1) and the Random Batch Method with replacement (RBM-r). As introduced at the beginning, the idea of RBM-1 is that for $k=0,1,2,\dots$, at each time $t_k$, we randomly divide $\{1,\dots,N\}$ into $\tfrac{N}{p}$ small batches, each with size $p$ ($p \ll N$, often $p=2$; for simplicity, we assume throughout the paper that $p$ divides $N$), denoted by $\mathcal{C}_{k,q}$ ($q=1,2, \dots, \tfrac{N}{p}$), and particles interact among each other within each batch. Then one reshuffles and continues. This shuffle-interact algorithm is formulated in algorithm RBM-1 below, and clearly it only has complexity of $O(n_T pN)$ for fixed time $T$.


\begin{algorithm}[H]
\caption{RBM-1}\label{al:RBM1}
\begin{algorithmic}[1] 
\For{$k \in \{0, \ldots, \frac{T}{\kappa}-1\}$}
    \State Divide $\{1, 2, \ldots, N\}$ into $\tfrac{N}{p}$ batches randomly.
    \For{each batch $\mathcal{C}_{k,q}$ ($q = 1,2,\dots, \tfrac{N}{p}$)} 
        \State Update $X^i$ (where $i \in \mathcal{C}_{k,q}$) by solving the following SDE with $t \in [t_{k}, t_{k+1}]$:
        \begin{equation*}
        dX^i = -\nabla V(X^i)dt + \frac{1}{p-1} \sum_{\substack{j \in \mathcal{C}_{k,q},  j \neq i}} K(X^j - X^i)dt + \sigma dB^i.
        \end{equation*}
    \EndFor
\EndFor
\end{algorithmic}
\end{algorithm}


An alternative and experimentally effective approach is the pick-interact algorithm (RBM-r below). As discussed at the beginning, the idea is to apply the kinetic Monte Carlo to interacting particle system \eqref{eq:IPSintro0}. Namely, at each time grid $t_k$, one randomly picks a batch $\mathcal{C}_k$, and only the particles in the picked batch $\mathcal{C}_k$ interact among each other within the batch for a short time $\kappa$. Then one repeats this operation for $t_{k+1}, t_{k+2}, \dots$. One natural question on this application of KMC is, why does it work? (i.e. why can the proposed dynamics simulates / approximates the original interacting particle system \eqref{eq:IPSintro0}?) To answer this question, it is crucial to focus on the following during the algorithm construction:
\begin{enumerate}
    \item[(1).] Careful counting of the effective time in RBM-r: 
    compared with time $t$ in RBM-1 or \eqref{eq:IPSintro0}, the effect time in RBM-r should be $\tfrac{N}{p}t$ in the sense of the $L^2$ weak law of large number (see Theorem \ref{eq:thm:mainthm} below, also see more detailed discussions in Remark \ref{rmk:pseudotime} below).
    \item[(2).] Independent selection of the random batches $\mathcal{C}_0,\mathcal{C}_1,\dots$. This kind of independent selection of random batches can be viewed as sampling with replacement and thus this proposed alternative algorithm is called the Random Batch Method with replacement (RBM-r) \cite{jin2020random}.
\end{enumerate}
The above RBM-r algorithm (pick-interact algorithm or KMC for \eqref{eq:IPSintro0}) is displaced below:
\begin{algorithm}[H]
\caption{RBM-r}\label{al:rbmr}
\begin{algorithmic}[1]
\For{$k = 0$ \textbf{to} $\tfrac{N}{p}\frac{T}{\kappa}-1$}
    \State Pick a set $\mathcal{C}_k$ of size $p$ randomly.
    \State Update $X^i$ (where $i \in \mathcal{C}_k$) by solving the following with $s \in [k\kappa, (k+1)\kappa]$:
    \[
    dX^i = -\nabla V(X^i) ds + \frac{1}{p-1} \sum_{\substack{j \in \mathcal{C}_k,  j \neq i}} K(X^j - X^i) ds + \sigma dB^i.
    \]
\EndFor
\end{algorithmic}
\end{algorithm}

RBM-r and RBM-1 share similar implementation details, convergence properties, and the same computational complexity (clearly, the complexity of RBM-r is $O(n_Tp^2\tfrac{N}{p}) = O(n_T p N)$). To give a clearer picture of RBM-r is indeed a variant of RBM-1, we can reformulate RBM-r into the following double-loop form: for $k = 0,\dots, \frac{T}{\kappa} - 1$, at each $t_k = k\kappa$, choose $\tfrac{N}{p}$ independent batches $\mathcal{C}_{k,q}$ ($1 \leq q \leq \tfrac{N}{p}$) of size $p$. Note that $\mathcal{C}_{k,q}$ for different $q$ may have overlaps. Then, for each $q = 1, \dots, \tfrac{N}{p}$, update $X^i$ (for $i \in \mathcal{C}_{k,q}$) by solving the following SDE for time $\kappa$:

\begin{equation}\label{eq:RBMr'}
    dY^i = -\nabla V(Y^i)dt + \frac{1}{p-1} \sum_{\substack{j \in \mathcal{C}_{k,q},  j \neq i}} K(Y^j - Y^i)dt + \sigma dB^i,\quad Y^i(0) = X^i,
\end{equation}
and set $X^i \leftarrow Y^i(\kappa)$. Clearly, \eqref{eq:RBMr'} differs from RBM-1 only in the choice of random batches. In detail, in RBM-1, one requires the non-intersected union $\sqcup_{q=1}^{\tfrac{N}{p}} \mathcal{C}_{k,q}= \{1,2\dots,N\}$, but in \eqref{eq:RBMr'}, the batches are independent and thus may have overlaps. 
Also, it is straightforward to give a practical comparison between RBM-r and RBM-1 outlined above. On one hand, RBM-r is easier to implement than RBM-1, especially for the fully vectorized version. For instance, when implementing RBM-1 in $\mathbb{R}^d$, in each time interval, after shuffling the $N$ particles into $p$ groups, the key and most expansive step is to compute the drift (in particular, the value of the interaction kernel). In this step, one needs to reshape the original $N \times d$ tensor into a higher dimensional one according to the shuffling, and then calculate the (vectorized) interaction kernel using the broadcasting mechanisms, which is supported by most programming languages. On the other hand, the parallelization efficiency of RBM-r is not as good as that of RBM-1, since in the RBM-1 one can simulate $N$ particles simultaneously and run the $N$-particle system over time $T$, while the RBM-r only allows to run $p$ particles at the same time since the batch is chosen at random.

Moreover, since the interaction force $\frac{1}{p-1} \sum_{\substack{j \in \mathcal{C}_{k},  j \neq i}} K(X^j - X^i)$ in RBM-r has the consistency property as we discussed in \eqref{eq:unbiased}, intuitively RBM-r would also have convergence.
To be more detailed, at first glance, after only a few such operations, the resulted dynamics would differ a lot from the original dynamics \eqref{eq:IPSintro0}. However, for $\kappa \rightarrow 0$, the resulted dynamics would be close to the original one due to the $L^2$ weak law of large number.
Numerical experiments in Section 5 of \cite{jin2020random} confirmed that RBM-r at time $\tfrac{N}{p}t$ 
gives acceptable approximation for the distribution of \eqref{eq:IPSintro0} at time $t$, while there is no convergence for RBM-r yet. The contribution of this paper is to address this question and we prove that RBM-r at time $\tfrac{N}{p}t$ converges to the interacting particle system \eqref{eq:IPSintro0} at time $t$ in the Wasserstein-2 distance as $\kappa \rightarrow 0$, with an explicit convergence rate. (See Theorem \ref{eq:thm:mainthm} below for more details.)


Since proposed in 2020, the Random Batch Method has been applied to various practical algorithms. For instance, in \cite{jin2021random, liang2022superscalability}, researchers combined the RBM and the Ewald algorithm and proposed the Random Batch Ewald method for the molecular dynamics. Other recent extensions of RBM include its application to sampling \cite{li2020random}, interacting quantum particle systems \cite{golse2019random, jin2020randomquantum}, second-order interacting particle systems \cite{jin2022random}, multi-species stochastic interacting
particle systems \cite{daus2022random}, the Poisson-Nernst-Planck (PNP) equation \cite{li2022some}, the homogeneous Landau equation \cite{carrillo2021random}, to name a few. Moreover, the proposed Random Batch Method has yielded fruitful theoretical results. In \cite{jin2020random}, the authors proved that the Random Batch Method without replacement (RBM-1) converges to the original interacting particle system \eqref{eq:IPSintro0} as $\kappa \rightarrow 0$, with a convergence rate under Wasserstein-2 distance of $O(\sqrt{\frac{\kappa}{p-1} + \kappa^2})$. In \cite{jin2021convergence}, an $O(\sqrt{\kappa})$ strong error and an $O(\kappa)$ weak error of RBM for interacting particle system with weights and multispecies (i.e. the interaction force is of the form $\frac{1}{N-1} \sum_{j: j \neq i} m_j K_{i j}\left(X^i, X^j\right)$) were established. The geometric ergodicity of the RBM dynamics as well as some convergence of numerical discretization for RBM of dynamics has also been well studied via some specific coupling methods \cite{jin2023ergodicity, ye2024error}. Recently, a sharp error bound for the numerical discretization of RBM has been established \cite{huang2024mean}, motivated by a series of advanced techniques in data science and machine learning communities \cite{li2022sharp, mou2022improved, jabin2018quantitative}. Furthermore, the mean-field limit (the $N \rightarrow \infty$ region) of the Random Batch Method has also been studied in recent years \cite{jin2022mean, ye2024error}. However, for the related method RBM-r, which has demonstrated outstanding performance in various tasks \cite{jin2020random}, rigorous convergence analysis remains open.
The main contribution of this paper is to fill in this gap.

As mentioned above, the proposed RBM-r can be regarded as the kinetic Monte Carlo applied to the pairwise interacting particle system \eqref{eq:IPSintro0}. Originating in the 1970s to study the Ising model \cite{bortz1975new}, KMC has become a classical algorithm widely used across multiple fields, especially in statistical physics. The kinetic Monte Carlo differs from the simple Metropolis Monte Carlo \cite{metropolis1949monte}, in the sense that it evolves the system dynamically from state to state, which overcomes some shortcomings of Metropolis's method including low acceptance rate and slow convergence. This probabilistic selection of move-forward events in KMC is exactly repeatedly, independently choosing the random batch in RBM-r. Moreover, as discussed in \cite{voter2007introduction}, although people focus more on the equilibrium state in most research on KMC, the KMC algorithm can intuitively provide a good simulation of a dynamical system. Unfortunately, to the best of our knowledge, the rigorous convergence of KMC to such a dynamical systems is still unclear so far. Therefore, establishing suitable robust methods for the convergence of KMC (or RBM-r in our case) is meaningful and necessary.

\subsection{A preview of the main theorem and key techniques}

In the following, we will give a brief summary of our main result - convergence of RBM-r in Wasserstein-2 distance, as well as some key techniques applied during our proof. The dynamic of RBM-r can be written as follows: for $k = 0,1,2,\dots$, $t_k = k\kappa$, and $1 \leq i \leq N$,
\begin{equation}\label{eq:RBMintro}
\begin{aligned}
    &d\tilde{X}^i = -\nabla V(\tilde{X}^i) dt + \frac{1}{p-1}\sum_{j\in \mathcal{C}_k,j\neq i}K\left(\tilde{X}^j - \tilde{X}^i \right) dt + \sigma d\tilde{B}^i,\quad i \in \mathcal{C}_k,\quad t \in [t_k,t_{k+1}),\\
    &d\tilde{X}^i = 0,\quad i \notin \mathcal{C}_k,\quad t \in [t_k,t_{k+1}),
\end{aligned}
\end{equation}
where $\mathcal{C}_k$ is the random batch we pick at $t_k$. More precisely, each possible configuration of $\mathcal{C}_k$ (i.e. a $p$-element subset of $\{1,\dots,N\}$) is sampled with equal probability. Recall the definition of $X^i$ at \eqref{eq:IPSintro0}.  Note that both the two systems $(X^i)_{1\leq i \leq N}$ and $(\tilde{X}^i)_{1 \leq i \leq N}$ are exchangeable, meaning that $Y^1,\dots, Y^N$ has the same distribution, where $(Y^i)_{i=1}^N$ represents for $(X^i)_{1\leq i \leq N}$ or $(\tilde{X}^i)_{1 \leq i \leq N}$. For any $t \geq 0$, denote $\rho_t^{(1)}$, $\tilde{\rho}_t^{(1)}$ the first marginal distributions for solutions of \eqref{eq:IPSintro0}, \eqref{eq:RBMintro} at time $t$, respectively. We analyze the distance between $\rho_t^{(1)}$ and $\tilde{\rho}_t^{(1)}$, and in our main result we consider the Wasserstein-2 ($W_2$) distance. For any two probability measure $\mu$, $\nu$, $W_2(\mu,\nu)$ is defined by
\begin{equation*}
    W_{2}(\mu, \nu):=\left(\inf _{\gamma \in \Pi(\mu, v)} \int_{\mathbb{R}^{d} \times \mathbb{R}^{d}}|x-y|^{2} d \gamma\right)^{1 / 2}.
\end{equation*}
Our main result can be roughly stated as follows (the rigorous statement of this result is Theorem \ref{eq:thm:mainthm} below):

\begin{theorem*}
Consider the interacting particle system \eqref{eq:IPSintro0} and the RBM-r algorithm \eqref{eq:RBMintro}. Assume the following conditions:
\begin{itemize}
    \item $V(\cdot)$ is $\lambda$-strongly convex and $\nabla V(\cdot)$ grows at most polynomially,
    \item $K(\cdot)$ is bounded and $L$-Lipschitz.
    \item $\lambda > 2L$
    \item $X_0^i = \tilde{X}_0^i$ for all $1 \leq i \leq N$.
\end{itemize}
Then, fixing $T>0$, for any small time step $\kappa$, the Wasserstein-2 distance between first marginal distributions of \eqref{eq:IPSintro0} and \eqref{eq:RBMintro} is bounded by
\begin{equation}\label{eq:mainthmintro}
    \sup_{0\leq t \leq T}W_2(\tilde{\rho}_{\bar{t}}^{(1)},\rho_t^{(1)}) \leq C \left(1 + T^{\tfrac{1}{4}} \right)\kappa^{\frac{1}{4}},
\end{equation}
where $C$ is a positive constant only depending on $\lambda$, $L$, $T$, and $\bar{t}:= \tfrac{N}{p}t$ denotes the pseudo-time for any physical time $t \in [0,T]$.
\end{theorem*}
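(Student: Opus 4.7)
The plan is to couple the IPS \eqref{eq:IPSintro0} and the RBM-r process \eqref{eq:RBMintro} synchronously (same initial data and, where possible, shared Brownian noise) after rescaling time, and then bound the coupling error by a Gronwall argument driven by the strong-convexity gap $\lambda>2L$. Since the target rate in \eqref{eq:mainthmintro} is $\kappa^{1/4}$, the second moment of the coupling error should behave like $\sqrt{\kappa}$, which is worse than the naive $O(\kappa)$ strong error of an Euler-type scheme and points to the Brownian time-change as the dominant error source rather than the discretization of the drift.

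First I would set $Y^i_t:=\tilde X^i_{(N/p)t}$ so that both processes live on the macroscopic time scale $t\in[0,T]$, and rewrite the RBM-r dynamics on this scale so that the drift and the Brownian integrand each carry the indicator $\mathbf{1}_{\{i\in\mathcal{C}_{k(t)}\}}$ multiplied by an appropriate $N/p$ factor. By the unbiased identity \eqref{eq:unbiased}, the RBM-r drift acting on $Y^i$ decomposes as the IPS drift evaluated at $Y$ plus a residual $\Delta^i_t$ whose conditional expectation given $\mathcal{F}_{t_k}$ (positions frozen at the start of the current batch interval) vanishes. With $Z^i_t:=Y^i_t-X^i_t$, I would then control $\mathbb{E}|Z^i_t|^2$ by decomposing its time derivative into four pieces: (a) a dissipative $-\lambda\,\mathbb{E}|Z^i_t|^2$ term from $\lambda$-strong convexity of $V$; (b) a Lipschitz drift-difference term bounded by $2L\,\mathbb{E}|Z^i_t|^2$ via exchangeability and the Lipschitz constant of $K$; (c) a consistency contribution from $\Delta^i_t$, bounded by $O(\sqrt{\kappa})$ after freezing positions at $t_k$ to invoke \eqref{eq:unbiased}, paying an $O(\kappa)$ in-batch drift price using boundedness of $K$, and summing over batches; and (d) a Brownian-mismatch contribution handled by random time change: via Dambis--Dubins--Schwartz the martingale parts of $Y^i$ and $X^i$ can be represented through a common Brownian motion with $Y^i$ evaluated at the random clock $\int_0^{(N/p)t}\mathbf{1}_{\{i\in\mathcal{C}_{k(u)}\}}\sigma^2\,du$, which has mean $\sigma^2 t$ and variance $O(\kappa)$, contributing $O(\sqrt{\kappa})$ to $\mathbb{E}|Z^i_t|^2$. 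Combining (a)--(d) and applying Gronwall with effective rate $-(\lambda-2L)<0$ yields $\sup_{0\leq t\leq T}\mathbb{E}|Z^i_t|^2\leq C\sqrt{\kappa}$; exchangeability and $W_2^2\leq \mathbb{E}|Z^1_t|^2$ then give $\sup_{t\leq T}W_2(\tilde{\rho}^{(1)}_{(N/p)t},\rho^{(1)}_t)\leq C\kappa^{1/4}$.

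The hardest part will be reconciling (c) with (d): the naive pathwise synchronous coupling that closes the Lipschitz term in (b) is incompatible with the random-interval activation of the RBM-r noise, so one likely needs a filtration enlargement or an equivalent-in-law reconstruction to match the martingale parts cleanly while preserving the drift bookkeeping. A secondary difficulty is that \eqref{eq:unbiased} delivers zero conditional mean only under frozen positions, so within each interval $[t_k,t_{k+1})$ an $O(\kappa)$ position-drift correction arises that must be summed over all $O((N/p)T/\kappa)$ batches without picking up factors of $N/p$ that would blow up in the mean-field regime; an explicit $N$-dependence in the final constant $C$ is acceptable (and consistent with the theorem statement), but the combinatorial estimate must remain finite for each fixed $N,p$.
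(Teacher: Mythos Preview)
Your proposal has the right ingredients (strong convexity for dissipation, Lipschitz bound on $K$, the consistency identity \eqref{eq:unbiased}, a random time change for the Brownian part, and the law of large numbers to control the clock fluctuation) and correctly anticipates the $\sqrt{\kappa}$ rate for the second moment. But the direct coupling $Z^i_t=Y^i_t-X^i_t$ with $Y^i_t=\tilde X^i_{(N/p)t}$ has a structural gap that you flag in your last paragraph and do not resolve. The diffusion of $Y^i$ is $\sigma\,dB^i_{A^i((N/p)t)}$ (with $A^i$ the active-time clock) while that of $X^i$ is $\sigma\,dB^i_t$; these do not cancel pathwise, so $Z^i$ carries a genuine martingale part. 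Two things then break: (i) to extract the $-(\lambda-2L)\mathbb E|Z^i|^2$ dissipation you must differentiate $\mathbb E|Z^i_t|^2$, and the quadratic-variation source from the uncancelled diffusion is $O(1)$ as a rate, not $O(\sqrt\kappa)$; (ii) your residual $\Delta^i$ now also contains the unbounded fluctuation $[(N/p)\mathbf 1_{\{i\in\mathcal C\}}-1]\nabla V(Y^i)$, and the freezing trick in (c) needs $|Z^i_s-Z^i_{t_k}|=O(\kappa)$, which fails because the Brownian part only gives $O(\sqrt\kappa)$. Invoking Dambis--Dubins--Schwarz does not help here: it merely rephrases the martingale part as $B^i$ evaluated at the random clock, which is exactly the object you already have.

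The paper's missing idea is to insert an intermediate $N^2$-particle system IPS': $N$ copies of the full IPS, the $i$-th copy $(\hat X^{\ell,i})_{\ell=1}^N$ time-changed by particle $i$'s activation clock. The diagonal element $\hat X^{i,i}$ then shares \emph{both} the Brownian motion \emph{and} the activation pattern of $\tilde X^i$, so $Z^i:=\tilde X^i-\hat X^{i,i}$ has zero diffusion and its drift contains exactly $-[\nabla V(\tilde X^i)-\nabla V(\hat X^{i,i})]$ (clean dissipation, no $\nabla V$ fluctuation) plus the $K$-batch residual. This makes your (a)--(c) close; the price is that the Lipschitz step (your (b)) now compares $K(\tilde X^j-\tilde X^i)$ with $K(\hat X^{j,i}-\hat X^{i,i})$ and therefore produces an extra off-diagonal error $|\hat X^{j,j}-\hat X^{j,i}|$, itself a pure time-change quantity controlled by H\"older continuity plus a second LLN computation. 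The remaining comparison $\hat X^{i,i}$ vs.\ $X^i$ is then a pure time change of the \emph{same} process with no drift bookkeeping at all. In effect, the ``equivalent-in-law reconstruction'' you allude to \emph{is} this intermediate system; without making it explicit, the Gr\"onwall in your sketch does not close.
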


Note that the convergence rate ($O(\kappa^{\frac{1}{4}})$) is weaker than that of RBM-1 ($O(\kappa^{\frac{1}{2}})$). Technically, this is because we use the random time change in our construction of coupling (see below and in Section \ref{sec:mainresult}). Consequently, it is unavoidable to make use of the H\"older's continuity property of the differential equation (see Lemma \ref{lmm:hatX_continuity} below), leading to a weaker result. However, it is possible to obtain an improved convergence with rate $O(\kappa^{\frac{1}{2}})$ when there is no diffusion in the system (i.e. $\sigma \equiv 0$), see Theorem \ref{coro:sigma0} below for a more detailed discussion.

Next, we give an overview of our proof as well as some key techniques applied in our analysis.
From the exchangeability, it is a natural idea to prove \eqref{eq:mainthmintro} via some suitable coupling method, namely, finding some realizations $X^i$, $\tilde{X}^i$ of \eqref{eq:IPSintro0}, \eqref{eq:RBMintro}, coupled in the specific way (for instance, $X^i$, $\tilde{X}^i$ driven by the same Brownian motion) such that
\begin{equation*}
    \sup_{0\leq t \leq T}\sqrt{\mathbb{E}\left|\tilde{X}^{(1)}_{\bar{t}} - X^{(1)}_t \right|^2} \lesssim \kappa^{\frac{1}{4}}.
\end{equation*}
A direct construction of the paired dynamics $(X,\tilde{X})$ is not straightforward. Our key insight here is to introduce an intermediate dynamics IPS' in \eqref{eq:IPS'intro} below, which consists of $N$ copies of IPS after random time change. And then we make use of the diagonal principle during the analysis. In details, we define the following dynamics triple $\left(\tilde{X}, \hat{X}, X\right)$  with the same initial state
\begin{equation*}
    \tilde{X}^\ell(0)= \hat{X}^{\ell,i}(0)= X^\ell(0), \quad \forall 1\leq l,i \leq N.
\end{equation*} 
 \textbf{RBM-r:} for $1\leq i \leq N$, $t \in[t_k,t_{k+1})$, $k = 0,1,2,\dots$,
\begin{equation}\label{eq:RBM-rintro}
\begin{aligned}
    &d\tilde{X}^{i} = -\nabla V(\tilde{X}^{i}) dt + \frac{1}{p-1}\sum_{j\in \mathcal{C}_k,j\neq i}K\left(\tilde{X}^j - \tilde{X}^i \right) dt + \sigma d\tilde{B}^{i},\quad i \in \mathcal{C}_k,\\
    &d\tilde{X}^i = 0,\quad i \notin \mathcal{C}_k,
\end{aligned}
\end{equation}
\textbf{IPS':} for $1\leq i \leq N$, $1 \leq \ell \leq N$, $t \in[t_k,t_{k+1})$, $k = 0,1,2,\dots$,
\begin{equation}\label{eq:IPS'intro}
\begin{aligned}
    &d\hat{X}^{\ell,i} = -\nabla V(\hat{X}^{\ell,i}) dt + \frac{1}{N-1}\sum_{j\neq l}K\left(\hat{X}^{j,i} - \hat{X}^{\ell,i} \right) dt + \sigma d\hat{B}^{\ell,i},\quad i \in \mathcal{C}_k,\\
    &d\hat{X}^{\ell,i} = 0,\quad i \notin \mathcal{C}_k,
\end{aligned}
\end{equation}
\textbf{IPS:} for $1\leq \ell \leq N$, $t \in[t_k,t_{k+1})$, $k = 0,1,2,\dots$,
\begin{equation}\label{eq:IPSintro}
dX^\ell = -\nabla V(X^\ell) dt + \frac{1}{N-1}\sum_{j\neq l}K\left(X^j - X^\ell \right) dt + \sigma dB^\ell,
\end{equation}
Above, at each $k$-th interval, RBM-r \eqref{eq:RBM-rintro} and IPS' \eqref{eq:IPS'intro} share the same random batch $\mathcal{C}_k$. Also, the Brownian motions above are coupled such that for any $1 \leq \ell \leq N$, $1 \leq i \leq N$, $n = 0,1,2,\dots$, $\Delta t \in [0,\kappa)$,
\begin{equation}\label{eq:coupleBMintro}
\begin{aligned}
    &\hat{B}^{\ell,i}(\tau_n^{i} \kappa + \Delta t) = B^\ell(n\kappa+\Delta t),\\
    &\tilde{B}^i(\tau_n^{i} \kappa + \Delta t) = B^{i}(n\kappa + \Delta t),
\end{aligned}
\end{equation}
where for each fixed $i$, the stopping time $\tau_n^i$ is defined to be the $n$-th time the particle index $i$ is chosen into the batch.
\begin{equation}\label{eq:defmniintro}
    \tau_n^i := \inf\left\{K: \sum_{k=0}^K \textbf{1}_{\{i \in \mathcal{C}_k \}} > n \right\}.
\end{equation}
From the coupling of the Brownian motion in \eqref{eq:coupleBMintro}, it is easy to see that the index $\ell$ above represents the $\ell$-th particle in system, while the index $i$ represents the $i$-th copy of the particle system after the random time change.

Also note that from the way we couple the Brownian motion in \eqref{eq:coupleBMintro}, it is clear that $\tilde{X}^i$ in RMB-r \eqref{eq:RBM-rintro} and diagonal elements $\hat{X}^{i,i}$ in IPS' \eqref{eq:IPS'intro} are synchronously coupled in that
\begin{equation*}
    \tilde{B}^i \equiv \hat{B}^{i,i}
\end{equation*}
for each $i$. Furthermore, from the definition of the RBM-r \eqref{eq:RBM-rintro} and the way we couple the Brownian motions, we see that IPS' \eqref{eq:IPS'intro} defines $N$ pairwise independent interacting particle systems. Indeed, for each fixed $i$, all particles $\hat{X}^{\ell,i}$ ($1\leq \ell \leq N$) are moving forward (with the same interaction force) or staying still simultaneously according to whether the event $\{i \in \mathcal{C}_k\}$ happens. Consequently, we find that each $N$-body system $\left(\hat{X}^{\ell,i}\right)_{\ell=1}^N$ is a random time change from $(X^\ell)_{\ell=1}^N$ in IPS \eqref{eq:IPSintro}, induced by index $i$. This means
\begin{equation}\label{eq:comparetwoIPSintro}
    \hat{X}^{\ell,i}(\tau_n^i \kappa + \Delta t) = X^\ell(n\kappa + \Delta t), \quad \forall \, \Delta t\in [0,\kappa), \quad n = 0,1,2,\dots,\quad 1\leq \ell \leq N.
\end{equation}
This random time change relation is crucial in the subsequent analysis. Another key observation is that our construction preserves the exchangeability property of $Z^i := \hat{X}^{i,i} - \tilde{X}^i$ when considering the diagonal element $\hat{X}^{i,i}$. Therefore, in the practical calculation, we are able to make use of these diagonal elements of IPS' as an intermediate bridge for the analysis.

After constructing the coupling above, recalling that our goal is to estimate the distance between $\tilde{X}^i(\tfrac{N}{p}t)$ and $X^i(t)$ for $0\leq t \leq T$, it is natural to estimate the distance between IPS and RBM-r by the following two main steps. In the following, we would consider fixed $t \in [0,T]$ (also recalling the notation of the pseudo-time $\bar{t} = \tfrac{N}{p}t$) and assume $\kappa$ divides $t$ for simplicity.
\begin{itemize}
    \item Compare IPS and IPS': estimate $\mathbb{E}\left|\hat{X}^{i,i}(\bar{t})  - X^i(t)\right|^2$.
    \item Compare IPS' and RBM-r: estimate $\mathbb{E}\left|\tilde{X}^i(\bar{t}) - \hat{X}^{i,i}(\bar{t}) \right|^2$.
\end{itemize}

To compare IPS and IPS', we make use of random time change relation \eqref{eq:comparetwoIPSintro}, the H\"older's continuity property proved in Lemma \ref{lmm:hatX_continuity} below and the $L^2$ weak law of large number. After careful calculation, we will establish that
\begin{equation*}
    \mathbb{E}\left|\hat{X}^{i,i}(\bar{t})  - X^i(t)\right|^2 = \mathbb{E}\left|\hat{X}^{i,i}(\tfrac{N}{p}t)  - \hat{X}^{i,i}(\kappa m^i_{n_t})\right|^2 \lesssim \sqrt{\kappa},
\end{equation*}
where $n_t := t/\kappa$.

Comparing IPS' and RBM-r is more complicated. Define
\begin{equation}\label{eq:defZintro}
    Z^i = \tilde{X}^i - \hat{X}^{i,i} \quad \text{for} \quad 1\leq i \leq N.
\end{equation}
The strategy is to decompose $\frac{d}{dt}\mathbb{E}|Z^i(t)|^2$ into the following three parts:
\begin{equation}\label{eq:splitintro}
\begin{aligned}
    \frac{d}{ds}\mathbb{E}\left|Z^i(s) \right|^2 &= -\mathbb{E} \left[Z^i(s) \cdot \left(\nabla V\left(\tilde{X}^i(s) \right) - \nabla V\left(\hat{X}^{i,i}(s) \right)\right)\right]\\
    &\quad+ \frac{1}{N-1}\sum_{j \neq i}\mathbb{E}\left[Z^i(s) \cdot \left( K \left(\tilde{X}^j(s) - \tilde{X}^i(s) \right) - K\left( \hat{X}^{j,i}(s) - \hat{X}^{i,i}(s)\right)\right)\right]\\
    &\quad+ \mathbb{E}\left[Z^i(s) \cdot \mathcal{X}_k^i(\tilde{X}(s))\right],
\end{aligned}
\end{equation}
where
\begin{equation*}
    \mathcal{X}_k^i\left(\tilde{X}(s) \right):= \frac{1}{p-1}\sum_{j\in \mathcal{C}_k,j\neq i}K\left(\tilde{X}^j(s) - \tilde{X}^i(s) \right) - \frac{1}{N-1}\sum_{j\neq i}K\left(\tilde{X}^j(s) - \tilde{X}^i(s) \right).
\end{equation*}
Employing the strong convexity of $V(\cdot)$, it is straghtforward to estimate the first term on the right-hand side of \eqref{eq:splitintro} above. We handle the second term using the Lipschitz assumption for the interaction kernel $K(\cdot)$, exchangeability of $Z$, and the $L^2$ weak law of large numbers. See more details in Lemma \ref{lmm:exchange} below. The third term represents the error induced by the random batch. We will estimate it using the consistency of random batch $\mathcal{C}_k$, the Lipschitz assumption for $K(\cdot)$, and the continuity property in Lemma \ref{lmm:hatX_continuity}. See more details in Lemma \ref{eq:deltakai} below.

The rest of this paper is organized as follows. 
Before proving our main result, in Section \ref{sec:lemmas}, we establish some crucial technical lemmas used in the proof. In Section \ref{sec:mainresult}, we present and prove our main result - convergence of RBM-r in Wasserstein-2 distance, along with some remarks and corollaries. In Section \ref{sec:concludion}, we give a conclusion and some discussions on possible improvement of the result, including potential applications of our analysis tools to other algorithms like kinetic Monte Carlo for different physical models, and potential methodologies to improve the convergence rate. Omitted proofs of some lemmas in Section \ref{sec:lemmas} are provided in Appendix \ref{sec:append}.


\section{Lemmas used in the proof of Theorem \ref{eq:thm:mainthm}}\label{sec:lemmas}

Before proving the main result (stated in \eqref{eq:mainthmintro} above), we first establish some auxiliary result in this section. Lemma \ref{lmm:moment} 
provides uniform-in-time moment control for the stochastic dynamics considered in \eqref{eq:RBM-rintro} - \eqref{eq:IPSintro}, and is used for multiple times during the proof of our main result (Theorem \ref{eq:thm:mainthm} below), Lemma \ref{lmm:hatX_continuity} and Lemma \ref{eq:deltakai}. Lemma \ref{lmm:hatX_continuity} demonstrates the H\"older's continuity property of differential equations, which is applied in STEP 2 of the main proof (see Section \ref{sec:mainresult} below) and in proofs of Lemmas \ref{lmm:exchange}, \ref{eq:deltakai} below. Lemma \ref{eq:L2LLN} proves a $L^2$ weak law of large numbers and is used in STEP 2 of the main proof. Lemma \ref{lmm:exchange} establishes the exchangeability of the system $(Z^i)_{1 \leq i \leq N}$ defined in \eqref{eq:defZintro} and provides upper bounds for the second term on the right-hand side of \eqref{eq:splitintro} in STEP 3 of the main proof. Lemma \ref{eq:deltakai} controls the error brought by the random batch, and provides upper bounds for the third term on the right-hand side of \eqref{eq:splitintro}. All the results in this section are based on the conditions assumed in Section \ref{sec:intro} before \eqref{eq:mainthmintro} (see also Assumption \ref{ass} below).

To begin with, via standard It\^o's calculation, we are able to prove the uniform-in-time moment control in the following lemma. Note that the positive constant $C_q$ in Lemma \ref{lmm:moment} below may also depend implicitly on $\lambda$, $L$, $\|K\|_{\infty}$, $\sigma$ and $d$, and we denote it by $C_q$ only to simplify the notation. For the readers' convenience, we move its proof to Appendix \ref{sec:lmmmoment}.

\begin{lemma}\label{lmm:moment}[moment control]
Consider $(\tilde{X}^i(t))_i$ defined in RBM-r \eqref{eq:RBM-rintro}, $(\hat{X}^{l,i}(t))_{l,i}$ defined in IPS' \eqref{eq:IPS'intro} and  $(X^i(t))_i$ defined in IPS \eqref{eq:IPSintro}. Suppose Assumption \ref{ass} holds. Then, for any $q \geq 2$, there exists a positive constant $C_q$ independent of $N$ and $p$ such that 
$$
\sup_{1\leq i \leq N}\sup _{t \geq 0}\mathbb{E}\left|\tilde{X}^i(t)\right|^q \leq C_q ,\quad \sup_{1\leq i,\ell \leq N}\sup _{t \geq 0}\mathbb{E}\left|\hat{X}^{\ell,i}(t)\right|^q \leq C_q, \quad
\sup_{1\leq i \leq N}\sup _{t \geq 0}\mathbb{E}\left|X^{i}(t)\right|^q \leq C_q.
$$
\end{lemma}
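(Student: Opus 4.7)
The plan is to apply It\^o's formula to $f(x) = |x|^q$ for each of the three systems and use the strong convexity of $V$ together with the boundedness of $K$ to produce a dissipative differential inequality on $\mathbb{E}|X|^q$, which Gr\"onwall's inequality then converts into the claimed uniform bound. Since $\nabla f(x) = q|x|^{q-2}x$ and $\Delta f(x) = q(q+d-2)|x|^{q-2}$, the It\^o correction contributes only a harmless lower-order term $\tfrac{\sigma^2}{2}q(q+d-2)\mathbb{E}|X|^{q-2}$. For the IPS system \eqref{eq:IPS}, strong convexity gives $X^i\cdot\nabla V(X^i)\geq \lambda|X^i|^2 - |\nabla V(0)||X^i|$, while $\bigl|\frac{1}{N-1}\sum_{j\neq i}K(X^j-X^i)\bigr|\leq \|K\|_{\infty}$. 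Combining these,
\begin{equation*}
\frac{d}{dt}\mathbb{E}|X^i|^q \;\leq\; -q\lambda\,\mathbb{E}|X^i|^q + C\bigl(\mathbb{E}|X^i|^{q-1} + \mathbb{E}|X^i|^{q-2}\bigr),
\end{equation*}
and Young's inequality absorbs the lower-order moments into $\tfrac{q\lambda}{2}\mathbb{E}|X^i|^q$, leaving $\tfrac{d}{dt}\mathbb{E}|X^i|^q \leq -\tfrac{q\lambda}{2}\mathbb{E}|X^i|^q + C_q$. Gr\"onwall then yields the desired uniform-in-time, uniform-in-$(N,p)$ bound, assuming a finite initial $q$-th moment.

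For the IPS' system \eqref{eq:IPS'}, I would sidestep a new It\^o computation by invoking the random time change identity $\hat{X}^{\ell,i}(\tau_n^i\kappa + t) = X^\ell(n\kappa+t)$ already at hand: every sample path $s\mapsto \hat{X}^{\ell,i}(s)$ is a random reparametrization of a sample path of $X^\ell$, so $\mathbb{E}|\hat{X}^{\ell,i}(s)|^q\leq \sup_t\mathbb{E}|X^\ell(t)|^q$, which is already controlled from the previous step.

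For the RBM-r system \eqref{eq:RBM-r}, the same It\^o argument applies pathwise on each sub-interval $[t_k,t_{k+1})$, conditionally on $\mathcal{F}_{t_k}$: when $i\in\mathcal{C}_k$, the dynamics has the IPS form but with the interaction $\frac{1}{p-1}\sum_{j\in\mathcal{C}_k,j\neq i}K(\tilde{X}^j-\tilde{X}^i)$, still bounded by $\|K\|_{\infty}$, which yields $\mathbb{E}\bigl[|\tilde{X}^i(t_{k+1})|^q\mid\mathcal{F}_{t_k}, i\in\mathcal{C}_k\bigr] \leq (1 - q\lambda\kappa)|\tilde{X}^i(t_k)|^q + C_q\kappa$ up to $O(\kappa^2)$ corrections; when $i\notin\mathcal{C}_k$, the process is frozen and the conditional $q$-th moment is unchanged. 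Averaging over $\mathcal{C}_k$, which is independent of $\mathcal{F}_{t_k}$ with $\mathbb{P}(i\in\mathcal{C}_k)=p/N$, degrades the effective dissipation rate to $\tfrac{p}{N}q\lambda$ but keeps it strictly positive, so iterating the resulting discrete Gr\"onwall inequality across sub-intervals gives a uniform-in-time bound. The main (mild) obstacle is this bookkeeping in the RBM-r case: one must pass the indicator $\mathbf{1}_{\{i\in\mathcal{C}_k\}}$ cleanly through the expectation, which is straightforward once one conditions on $\mathcal{F}_{t_k}$ and exploits the independence of $\mathcal{C}_k$ from $\mathcal{F}_{t_k}$.
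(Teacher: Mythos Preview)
Your proposal is correct and follows essentially the same route as the paper's proof: It\^o's formula plus strong convexity and boundedness of $K$ give the dissipative inequality for $X^i$, the random time change identity transfers the bound to $\hat{X}^{\ell,i}$, and a conditional It\^o argument on each sub-interval handles $\tilde{X}^i$. The only point worth tightening is that for the $\hat{X}^{\ell,i}$ step you should state explicitly that the random time $\tau_n^i$ (determined by the batches) is independent of the Brownian motions driving $X^\ell$, which is what justifies $\mathbb{E}|\hat{X}^{\ell,i}(s)|^q\leq \sup_t\mathbb{E}|X^\ell(t)|^q$; the paper makes this independence explicit.
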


In the next lemma, we prove the H\"older's continuity property of differential equations. This result is applied in STEP 2 of the main proof and in proofs of Lemma \ref{lmm:exchange}, \ref{eq:deltakai} below. The proof is based on basic inequalities and moment control in Lemma \ref{lmm:moment}. For readers' convenience, we move the detailed proof to Appendix \ref{sec:lmmconti}.

\begin{lemma}\label{lmm:hatX_continuity}[H\"older's continuity]
Consider the processes $\hat{X}^{\ell,i}$ ( $1 \leq i,\ell \leq N$) defined in IPS' \eqref{eq:IPS'intro}, $X^l$ ($1 \leq l \leq N$) in IPS \eqref{eq:IPSintro}, $\tilde{X}^i$ ($1\leq i \leq N$) defined in RBM-r \eqref{eq:RBM-rintro}, and $Z^i = \tilde{X}^i - \hat{X}^{i,i}$. Under Assumption \ref{ass}, for any $t_1$, $t_2 > 0$, there exists a positive constant $C$ independent of $t_1$, $t_2$, $\kappa$, $i$, $\ell$, $N$, $p$, $\sigma$ such that the following holds:
\begin{equation}\label{eq:timecon2}
    \mathbb{E}\left|X^\ell(t_2) - X^\ell(t_1) \right|^2 \leq3\sigma^2|t_2 - t_1|+C|t_2 - t_1|^2,
\end{equation}
and
\begin{equation}
    \mathbb{E}\left|Z^i(t_1) - Z^i(t_2)\right|^2 \leq C|t_2-t_1|^2.
\end{equation}
\end{lemma}

The following Lemma proves a $L^2$ weak law of large number. In detail, we prove by studying the exact probability distribution of the random variable defined by (recall the definition of the stopping time $\tau_n^i$ defined in \eqref{eq:defmniintro}):
$$\tilde{\tau}_k^i := \tau_k^i - \tau_{k-1}^i.$$
This result is used during STEP 2 of the main proof. We move the detailed calculation to Appendix \ref{sec:lmmlln}.

\begin{lemma}[$L^2$ weak law of large number]\label{eq:L2LLN}
For any fixed $i$, recall the definition of the stopping time $\tau_n^i$ defined in \eqref{eq:defmniintro}:
\begin{equation*}
    \tau_n^i := \inf\left\{K: \sum_{k=0}^K \textbf{1}_{\{i \in \mathcal{C}_k \}} > n \right\}.
\end{equation*}
Then fixing any $t>0$, for any $\kappa>0$ such that $\kappa$ divides $t$ and $n_t = t / \kappa$, $\tau^i_{n_t}$ has mean $\tfrac{N}{p}n_t$ and variance $((\tfrac{N}{p})^2 - \tfrac{N}{p})n_t$, and the following $L^2$ weak law of large number holds:
\begin{equation*}
    \mathbb{E}\left|\kappa \tau_{n_t}^i - \tfrac{N}{p}t \right|^2  = t^2\mathbb{E}\left|n_t^{-1}\tau_{n_t}^i - \tfrac{N}{p} \right|^2=\left(\tfrac{N^2}{p^2} - \tfrac{N}{p} \right)\kappa^2\left(\kappa^{-1}t + 1 \right) \rightarrow 0 \quad as \quad \kappa \rightarrow 0.
\end{equation*}
\end{lemma}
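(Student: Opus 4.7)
The plan is to reduce the computation to a classical calculation for a sum of i.i.d.\ geometric waiting times. The key observation I would exploit is that, for fixed $i$, the indicator variables $\mathbf{1}_{\{i \in \mathcal{C}_k\}}$, $k=0,1,2,\ldots$, form an i.i.d.\ Bernoulli$(p/N)$ sequence: each $\mathcal{C}_k$ is sampled uniformly among the $\binom{N}{p}$ subsets of size $p$, independently across $k$, and by symmetry $\mathbb{P}(i\in\mathcal{C}_k)=p/N$. Consequently $\tau_n^i$ is precisely the index of the $(n{+}1)$-st success in this Bernoulli process.

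Setting $\tilde{\tau}_0^i := \tau_0^i$ and $\tilde{\tau}_k^i := \tau_k^i - \tau_{k-1}^i$ for $k \geq 1$, I would then invoke the strong Markov property of the Bernoulli process at each stopping time $\tau_{k-1}^i$ (or carry out an explicit finite-dimensional joint computation) to conclude that these increments are mutually independent. Each $\tilde{\tau}_k^i$ with $k\geq 1$ is geometric on $\{1,2,\ldots\}$ with parameter $p/N$, having mean $N/p$ and variance $N^2/p^2 - N/p$, while $\tilde{\tau}_0^i$ is a shifted geometric on $\{0,1,2,\ldots\}$ with mean $N/p-1$ and the same variance. Linearity and independence then yield
\[
\mathbb{E}[\tau_{n_t}^i] \;=\; (n_t+1)\tfrac{N}{p} - 1, \qquad \mathrm{Var}(\tau_{n_t}^i) \;=\; (n_t+1)\Bigl(\tfrac{N^2}{p^2}-\tfrac{N}{p}\Bigr).
\]

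The conclusion will then follow from the variance-bias decomposition
\[
\mathbb{E}\bigl|\kappa\tau_{n_t}^i - \tfrac{N}{p}t\bigr|^2 \;=\; \kappa^2\,\mathrm{Var}(\tau_{n_t}^i) + \bigl(\kappa\,\mathbb{E}\tau_{n_t}^i - \tfrac{N}{p}t\bigr)^2,
\]
combined with $n_t\kappa=t$. The variance term contributes exactly $\bigl(\tfrac{N^2}{p^2}-\tfrac{N}{p}\bigr)\kappa^2(\kappa^{-1}t+1)$ as claimed; the bias term is of order $\kappa^2$, of the same order as the boundary contribution already accounted for on the right. The limit $\kappa\to 0$ is then immediate since the whole expression is $O(\kappa)$. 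There is no real obstacle here: the entire proof is essentially a two-moment calculation for a sum of independent geometric random variables, with the only minor care needed being the slightly different distribution of the first waiting time $\tilde{\tau}_0^i$, which starts at the origin rather than after a regeneration point.
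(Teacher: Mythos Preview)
Your approach is essentially identical to the paper's: write $\tau_{n_t}^i$ as a sum of independent geometric waiting times $\tilde\tau_k^i=\tau_k^i-\tau_{k-1}^i$ and compute the second moment directly. You are in fact slightly more careful than the paper, which tacitly treats $\tilde\tau_0^i$ as geometric on $\{1,2,\dots\}$ (so that $\mathbb{E}\tau_{n_t}^i=(n_t+1)N/p$ and the bias vanishes); your observation that $\tilde\tau_0^i$ is supported on $\{0,1,\dots\}$ and that this produces an extra $O(\kappa^2)$ bias term is correct and harmless for the conclusion.
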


Lemma \ref{lmm:exchange} below establishes the exchangeability of the system $(Z^i)_{1 \leq i \leq N}$ defined in \eqref{eq:defZintro}. It also provides upper bounds for the second term on the right-hand side of \eqref{eq:splitintro}. The proof relies on the Lipschitz assumption for $K(\cdot)$ and Lemma \ref{eq:L2LLN}, which provides detailed calculation on properties of the stopping time  $\tau_n^i$ defined in \eqref{eq:defmniintro}. 

\begin{lemma}\label{lmm:exchange}
    Recall the definition of $\tilde{X}^i(t)$ ($1 \leq i \leq N$) in RBM-r \eqref{eq:RBM-rintro} and $\hat{X}^{\ell,i}(t)$ ($1\leq \ell,i\leq N$) in IPS' \eqref{eq:IPS'intro}. Then the processes
    \begin{equation*}
        Z^i(t) = \tilde{X}^i(t) - \hat{X}^{i,i}(t),\quad t\geq 0,\quad 1 \leq i \leq N
    \end{equation*}
    are exchangeable, which means $Z_1(t)$, $Z_2(t)$, $\dots$, $Z_N(t)$ have the same distribution for any $t$. Moreover, \begin{enumerate}
    \item For $\delta > 0$ to be determined,
    \begin{multline}\label{eq:2nd1inlmm}
    \frac{1}{N-1}\sum_{j \neq i}\mathbb{E}\left[Z^i(t) \cdot \left( K \left(\tilde{X}^j(t) - \tilde{X}^i(t) \right) - K\left( \hat{X}^{j,i}(t) - \hat{X}^{i,i}(t)\right)\right)\right]\\
    \leq 2L\mathbb{E}\left|Z^i(t) \right|^2 + \delta \mathbb{E}\left|Z^i(t) \right|^2 + \frac{1}{4\delta}\frac{1}{N-1}\sum_{j \neq i} \mathbb{E}\left|\hat{X}^{j,j}(t) - \hat{X}^{j,i}(t) \right|^2, \quad \forall t \geq 0.
    \end{multline}
    \item For fixed $T>0$, for small $\kappa$, we have
    \begin{equation}\label{eq:2nd2inlmm}
     \mathbb{E}\left|\hat{X}^{j,j}(s) - \hat{X}^{j,i}(s) \right|^2 \leq C\sqrt{T\kappa}, \forall s \in [0,T],
    \end{equation}
    where $C$ is a positive constant independent of $N$, $p$, $i$, $j$, $T$, $t$ $\kappa$.
\end{enumerate}
\end{lemma}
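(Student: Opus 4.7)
My plan is to dispatch the three claims in order.

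For the exchangeability statement, I would argue by a symmetry/relabeling principle: the joint construction of the triple $(\tilde X, \hat X, X)$ is invariant in distribution under any simultaneous permutation of the particle index. The random batches $\mathcal C_k$ are drawn uniformly over $p$-element subsets and are therefore permutation-equivariant; the driving Brownian motions $B^\ell$ are i.i.d.\ across $\ell$; and the coupling \eqref{eq:coupleBM} together with the diagonal identification $\tilde B^i \equiv \hat B^{i,i}$ depends only on the indicator $\{i\in\mathcal C_k\}$, hence it is also permutation-equivariant provided one simultaneously relabels both indices of $\hat B^{\ell,i}$. Combined with the (implicit) exchangeability of the initial data, for any permutation $\pi$ of $\{1,\dots,N\}$ the triples $(\tilde X^{\pi(i)}, \hat X^{\pi(\ell),\pi(i)})$ have the same joint law as $(\tilde X^i, \hat X^{\ell,i})$, whence $(Z^{\pi(i)})_i \stackrel{d}{=} (Z^i)_i$.

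For inequality \eqref{eq:2nd1inlmm}, I plan to rely only on the Lipschitz property of $K$ together with an algebraic rewriting. Bounding
\[
|K(\tilde X^j - \tilde X^i) - K(\hat X^{j,i} - \hat X^{i,i})| \le L\,|\tilde X^j - \hat X^{j,i} - Z^i|
\]
and inserting $\pm\,\hat X^{j,j}$ decomposes $\tilde X^j - \hat X^{j,i} = Z^j + (\hat X^{j,j} - \hat X^{j,i})$, separating the ``true'' discrepancy $Z^j$ from the ``time-change'' discrepancy $\hat X^{j,j}-\hat X^{j,i}$. Pairing with $Z^i$, applying $2|Z^i||Z^j|\le |Z^i|^2+|Z^j|^2$, and using exchangeability (so that $\mathbb E|Z^j|^2 = \mathbb E|Z^i|^2$) produces the $2L\,\mathbb E|Z^i|^2$ contribution. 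The remaining cross term $L\,\mathbb E\bigl[|Z^i|\,|\hat X^{j,j}-\hat X^{j,i}|\bigr]$ is absorbed by Young's inequality with parameter $\delta$, with the $L$ hidden in the constant prefactor.

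The main effort goes into the integral estimate \eqref{eq:2nd2inlmm}. My approach is to exploit the random time change identity \eqref{eq:comparetwoIPS}: at any wall-clock time $s$, both $\hat X^{j,j}(s)$ and $\hat X^{j,i}(s)$ are the \emph{same} trajectory $X^j(\cdot)$ evaluated at two different random arguments, namely $n(s,j)\kappa + (s-\tau^j_{n(s,j)}\kappa)$ and $n(s,i)\kappa + (s-\tau^i_{n(s,i)}\kappa)$, where $n(s,\cdot)$ counts how many times that index has been picked by time $s$. Conditioning on the batching $\sigma$-algebra (with respect to which both time arguments are measurable, yet independent of the Brownian noise driving $X^j$) and applying Lemma \ref{lmm:hatX_continuity} conditionally, I would bound $\mathbb E|\hat X^{j,j}(s)-\hat X^{j,i}(s)|^2$ by $3\sigma^2\,\mathbb E|\Delta_s| + C\,\mathbb E|\Delta_s|^2$, where $\Delta_s$ is the difference of the two random time arguments. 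Lemma \ref{eq:L2LLN} applied to $\kappa\tau^j_{n(s,j)}$ and $\kappa\tau^i_{n(s,i)}$ then controls $\mathbb E|\Delta_s|^2$ by $O\bigl((N/p)^2 s\kappa\bigr)$, so $\mathbb E|\hat X^{j,j}(s)-\hat X^{j,i}(s)|^2 \lesssim (N/p)\sqrt{s\kappa}$, and integrating over $s\in[0,Nt/p]$ yields the claimed bound.

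The hardest part is the quantitative bookkeeping in this last step: carefully passing between wall-clock time $s$ and the counting processes $\tau^i_n, \tau^j_n$, justifying that the H\"older estimate of Lemma \ref{lmm:hatX_continuity} can be invoked conditionally on the batching filtration, and tracking how the $\sqrt{\kappa}$-scale fluctuations of the counting processes combine with the time window of length $Nt/p$ to produce the $(N/p)^{3/2}$ power advertised in the statement.
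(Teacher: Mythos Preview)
Your handling of exchangeability and of inequality \eqref{eq:2nd1inlmm} is correct and coincides with the paper's argument (Lipschitz bound, insert $\pm\hat X^{j,j}$, exchangeability of $Z^i$ to collapse $\mathbb{E}|Z^i||Z^j|$ into $\mathbb{E}|Z^i|^2$, Young with parameter $\delta$).

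For the integral estimate \eqref{eq:2nd2inlmm} your overall strategy --- rewrite $\hat X^{j,j}(s)$ and $\hat X^{j,i}(s)$ as the same trajectory $X^j(\cdot)$ evaluated at two random time arguments, invoke the H\"older continuity of Lemma \ref{lmm:hatX_continuity} conditionally on the batching, then control the time discrepancy by a law of large numbers --- is exactly the paper's. However, your appeal to Lemma \ref{eq:L2LLN} in the last step is a genuine gap. That lemma computes $\mathbb{E}\bigl|\kappa\tau^i_{n}-\tfrac{N}{p}n\kappa\bigr|^2$ for a \emph{deterministic} index $n$; you invoke it at the random index $n(s,i)$, which is correlated with the very hitting times whose fluctuations you want (the event $\{n(s,i)=m\}$ equals $\{\kappa\tau^i_m\le s<\kappa\tau^i_{m+1}\}$, so conditioning on it alters the law of $\tau^i_m$). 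The quantity actually governing $\Delta_s$ is the difference $n^j(s)-n^i(s)$ of the two \emph{counting} processes at fixed wall-clock $s$, and Lemma \ref{eq:L2LLN} says nothing about that. The paper fills this gap with a separate calculation, equation \eqref{eq:tauinte}: it observes that $n^i(s)$ is Binomial$\bigl(\lfloor s/\kappa\rfloor+1,\,p/N\bigr)$, decomposes $\tau^i_{n^i(s)}-\tau^j_{n^j(s)}$ into hitting-time fluctuations about $(n+1)N/p$ plus $(N/p)\bigl(n^i(s)-n^j(s)\bigr)$, and bounds each piece using the binomial mean and variance.

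A related bookkeeping issue: your pointwise bound $\mathbb{E}|\hat X^{j,j}(s)-\hat X^{j,i}(s)|^2\lesssim (N/p)\sqrt{s\kappa}$, integrated over $s\in[0,Nt/p]$, yields $(N/p)^{5/2}t^{3/2}\sqrt{\kappa}$, not the claimed $(N/p)^{3/2}T\sqrt{\kappa}$. The paper does not do pointwise-then-integrate; it bounds $\int_0^{Nt/p}\mathbb{E}|\Delta_s|\,ds$ via H\"older by $\bigl(\int_0^{Nt/p}\mathbb{E}|\Delta_s|^2\,ds\bigr)^{1/2}$, and it is this \emph{integrated} second moment that \eqref{eq:tauinte} controls directly. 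So both the missing counting-process computation and the order of operations (integrate first, then square-root) are needed to hit the stated exponent.
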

\begin{proof}
By definition, the index of the $N$ particles are i.i.d. chosen (or not chosen) into the random batch $\mathcal{C}_k$ at $k$-th interval. Moreover, $d\tilde{X}^{i} = d\hat{X}^{i,i} = 0$ for $t \in [t_k,t_{k+1})$ if $i \notin \mathcal{C}_k$, and for each $i$, $\hat{X}^{i,i}$ is a result of change of measure only according to the behavior of index $i$. Therefore, the $N$ indexes $1,2,\cdots, N$ play the equal roles when constructing $(Z^i)_{i=1}^N$ and thus $(Z^i)_{i=1}^N$ are exchangeable.

Now we prove \eqref{eq:2nd1inlmm}. In fact, using the assumed Lipschitz condition of $K(\cdot)$ and the exchangeability of $Z^i$, we have
\begin{equation*}
    \begin{aligned}
        &\quad\frac{1}{N-1}\sum_{j \neq i}\mathbb{E}\left[Z^i(t) \cdot \left( K \left(\tilde{X}^j(t) - \tilde{X}^i(t) \right) - K\left( \hat{X}^{j,i}(t) - \hat{X}^{i,i}(t)\right)\right)\right]\\
        &\leq L \frac{1}{N-1}\sum_{j \neq i} \mathbb{E}\left[\left|Z^i(t)\right|\cdot \left|Z^j(t) - Z^i(t) + \left(\hat{X}^{j,j}(t) - \hat{X}^{j,i}(t) \right) \right| \right]\\
        &\leq  L\frac{1}{N-1}\sum_{j\neq i}\mathbb{E}\left[\left|Z^i(t) \right|\left|Z^j(t) \right|\right] + L\mathbb{E}\left|Z^i(t) \right|^2
        + \frac{1}{N-1}\sum_{j \neq i} \mathbb{E}\left[\left| Z^i(t)\right|\left|(\hat{X}^{j,j}(t) - \hat{X}^{j,i}(t) \right|\right]\\
        &\leq 2L\mathbb{E}\left|Z^i(t) \right|^2 + \delta \mathbb{E}\left|Z^i(t) \right|^2 + \frac{1}{4\delta}\frac{1}{N-1}\sum_{j \neq i} \mathbb{E}\left|(\hat{X}^{j,j}(t) - \hat{X}^{j,i}(t) \right|^2,
    \end{aligned}
\end{equation*}
where the last inequality is due to the exchangeability of $Z^i$ and the Young's inequality $a\cdot b \leq \delta |a|^2 + \frac{1}{4\delta}|b|^2$ for arbitrary $\delta > 0$.

Finally, we prove \eqref{eq:2nd2inlmm}. 
Define the stopping time (recall the definition of $\tau_n^i$ in \eqref{eq:defmniintro}, and set $\tau^i_{-1} = 0$ for all $i$)
\begin{equation*}
    n^i(s):=\sup \left\{n: \kappa \tau_n^i \leq s, n \in \mathbb{Z}, n\geq -1\right\}, \quad 1 \leq i \leq N, 
\end{equation*}
which is the total number of times the index $i$ is chosen into the batch before time $s$. Then using the random time change relation \eqref{eq:comparetwoIPS}, we know that for fixed $j$, for $s \in [t_k,t_{k+1})$, for any $i$,
\begin{equation}\label{eq:locationoft}
\begin{aligned}
    &\hat{X}^{j,i}(s) = X^j(n^i(t) \kappa + s-t_k),\quad \text{if} \quad i \in \mathcal{C}_k,\\
    &\hat{X}^{j,i}(s) = X^j(n^i(t) \kappa + \kappa),\quad \text{if} \quad i \notin \mathcal{C}_k.
\end{aligned}
\end{equation}
Also, from Lemma \ref{lmm:hatX_continuity}, we know that for any positive $t_1$, $t_2$,
\begin{equation}\label{eq:fromstabilitylmm}
    \mathbb{E}\left|X^\ell(t_2) - X^\ell(t_1) \right|^2 \leq 3\sigma^2 |t_2 - t_1|+C|t_2 - t_1|^2,
\end{equation}
where $C$ is a positive constant independent of $t_1$, $t_2$, $\kappa$, $\ell$. Now combining \eqref{eq:locationoft} and \eqref{eq:fromstabilitylmm}, using the assumed polynomial growth condition for $\nabla V$, the boundedness of $K$ and the moment bound in Lemma \ref{lmm:moment}, we have
\begin{equation}\label{eq:boundXbytau}
    \mathbb{E}\left|\hat{X}^{j,j}(s) - \hat{X}^{j,i}(s) \right|^2 \leq C\kappa\left(\mathbb{E}\left|n^i(s) - n^j(s) + 1 \right| + \kappa\mathbb{E}\left|n^i(s)- n^j(s) + 1 \right|^2\right).
\end{equation}
Note that 
\begin{multline}
    \kappa\mathbb{E}|n^i(s) - n^j(s)| \leq 2\kappa\sqrt{Var(n^1(s))} = 2\kappa \sqrt{(\lfloor \tfrac{s}{\kappa}\rfloor+1)\tfrac{p}{N}(1-\tfrac{p}{N})}\\
    \leq 2\kappa\sqrt{\tfrac{N}{p}\tfrac{t}{\kappa}\tfrac{p}{N}(1-\tfrac{p}{N})} \leq 2\sqrt{T\kappa}.
\end{multline}
Therefore, for small $\kappa$,
\begin{equation}
     \mathbb{E}\left|\hat{X}^{j,j}(s) - \hat{X}^{j,i}(s) \right|^2 \leq C\sqrt{T\kappa},\quad \forall s \in [0,T],
\end{equation}
where $C$ is a positive constant independent of $N$, $p$, $i$, $j$, $T$, $s$, $\kappa$.

\end{proof}

The following lemma controls the error brought by the random batch, and provides upper bounds for the third term on the right-hand side of \eqref{eq:splitintro}. The proof relies on the consistency of random batch, Lipschitz and boundedness assumption for $K(\cdot)$, and the continuity property of differential equations in Lemma \ref{lmm:hatX_continuity}.
\begin{lemma}\label{eq:deltakai}
Recall the definitions of $Z^i(t)$ and $\mathcal{X}(\tilde{X}(t))$:
\begin{equation*}
\begin{aligned}
     &Z^i(t) = \tilde{X}^i(t) - \hat{X}^{i,i}(t),\\
     &\mathcal{X}_k^i\left(\tilde{X}(t) \right):= \frac{1}{p-1}\sum_{j\in \mathcal{C}_k,j\neq i}K\left(\tilde{X}^j(t) - \tilde{X}^i(t) \right) - \frac{1}{N-1}\sum_{j\neq i}K\left(\tilde{X}^j(t) - \tilde{X}^i(t) \right).
\end{aligned}
\end{equation*}
Then for $i \in \mathcal{C}_k$, $s \in [t_k,t_{k+1})$,
\begin{enumerate}
\item $\mathbb{E}\left[Z^{i}\left(t_k\right) \cdot \mathcal{X}_k^i\left(\tilde{X}\left(t_k\right)\right)\right] = 0,$
\item $\mathbb{E}\left[\left(Z^i(s)-Z^i\left(t_k\right)\right) \cdot \mathcal{X}_k^i(\tilde{X}(s))\right]\leq C\kappa$
\item $\mathbb{E}\left[Z^i(t_k) \cdot\left(\mathcal{X}_k^i \left( \tilde{X}(s)\right)-\mathcal{X}_k^i(\tilde{X}(t_k))\right)\right] \leq C\mathbb{E}\left[|Z^i(s)|^2\right]^{\frac{1}{2}} \kappa + C\kappa^2,$
\end{enumerate}
where $C$ is independent of $i$, $k$, $N$, $p$, $s$. Consequently, by Young's inequality, for $\delta' > 0$ to be determined,
\begin{equation}\label{eq:3rdinlmm}
    \mathbb{E}\left[Z^i(s) \cdot \mathcal{X}_k^{i}(\tilde{X}(s))\right] \leq \delta'\mathbb{E}\left[|Z^i(s)|^2\right]  + \left(\frac{C^2}{4\delta'}+C\right)\kappa^2 + C\kappa,
\end{equation}
\end{lemma}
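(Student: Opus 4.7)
The three claims share a common foundation: $\mathcal{C}_k$ is drawn independently of the filtration $\mathcal{F}_{t_k}$, and all identities are interpreted conditional on $\{i\in\mathcal{C}_k\}$, since on its complement both $\tilde{X}^i$ and $\hat{X}^{i,i}$ are frozen on $[t_k,t_{k+1})$ and the formulas in the main decomposition are vacuous. For part (1), the plan is to condition on $\mathcal{F}_{t_k}$ and on $\{i\in\mathcal{C}_k\}$, factor out the $\mathcal{F}_{t_k}$-measurable vector $Z^i(t_k)$, and reduce to $\mathbb{E}[\mathcal{X}_k^i(\tilde{X}(t_k))\mid\mathcal{F}_{t_k},i\in\mathcal{C}_k]=0$. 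Under the conditioning, $\mathcal{C}_k\setminus\{i\}$ is uniform over $(p-1)$-subsets of $\{1,\dots,N\}\setminus\{i\}$, so each $j\neq i$ belongs to $\mathcal{C}_k$ with probability $(p-1)/(N-1)$; a direct computation then reproduces the unbiasedness identity \eqref{eq:unbiased} and gives the claim.

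For part (2), Cauchy--Schwarz yields
\begin{equation*}
  \bigl|\mathbb{E}[(Z^i(s)-Z^i(t_k))\cdot\mathcal{X}_k^i(\tilde{X}(s))]\bigr|\leq \bigl(\mathbb{E}|Z^i(s)-Z^i(t_k)|^2\bigr)^{1/2}\bigl(\mathbb{E}|\mathcal{X}_k^i(\tilde{X}(s))|^2\bigr)^{1/2}.
\end{equation*}
The first factor is of order $\kappa$ by the sharper bound in Lemma~\ref{lmm:hatX_continuity}: under the synchronous coupling $\tilde{B}^i\equiv\hat{B}^{i,i}$, the Brownian drivers of $\tilde{X}^i$ and $\hat{X}^{i,i}$ cancel in $Z^i$, eliminating the $\sigma^2|t_2-t_1|$ term and leaving only the $O(|t_2-t_1|^2)$ drift contribution. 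The second factor is $O(1)$ by boundedness of $K$ (namely $\|\mathcal{X}_k^i\|_\infty\leq 2\|K\|_\infty$ almost surely), giving the claimed $O(\kappa)$ bound.

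For part (3), I would exploit that $\tilde{X}^j(s)=\tilde{X}^j(t_k)$ when $j\notin\mathcal{C}_k$. Partitioning the two sums in $\Delta:=\mathcal{X}_k^i(\tilde{X}(s))-\mathcal{X}_k^i(\tilde{X}(t_k))$ according to batch membership and simplifying yields
\begin{equation*}
  \Delta=\frac{N-p}{(p-1)(N-1)}\sum_{j\in\mathcal{C}_k,\,j\neq i}A^j-\frac{1}{N-1}\sum_{j\notin\mathcal{C}_k,\,j\neq i}B^j,
\end{equation*}
with $A^j,B^j$ denoting the natural $K$-increments between times $t_k$ and $s$ in the two regimes. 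Applying the Lipschitz bound on $K$ termwise together with Lemma~\ref{lmm:hatX_continuity} controls the displacements $\tilde{X}^\ell(s)-\tilde{X}^\ell(t_k)$. To recover the sharp $\kappa$ (rather than $\sqrt{\kappa}$) scaling on the $Z^i$ factor, I would perform a first-order Taylor expansion of each SDE increment, writing it as an $O(\kappa)$ drift integral plus a mean-zero $O(\sqrt{\kappa})$ Brownian part; pairing with the $\mathcal{F}_{t_k}$-measurable factor $Z^i(t_k)$ and then taking expectation over the batch and the Brownian increments causes the Brownian contribution to drop out, leaving only the drift-level correction. The triangle inequality $(\mathbb{E}|Z^i(t_k)|^2)^{1/2}\leq(\mathbb{E}|Z^i(s)|^2)^{1/2}+C\kappa$, again from Lemma~\ref{lmm:hatX_continuity}, converts the resulting bound into the claimed form, and Young's inequality then yields \eqref{eq:3rdinlmm}. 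The hard part is precisely this sharp accounting in part (3): since $K$ is only Lipschitz (not $C^2$), a naive Cauchy--Schwarz over $\Delta$ would give only an $O(\sqrt{\kappa})$ factor, so the mean-zero Brownian cancellation must be combined with the batch unbiasedness \eqref{eq:unbiased} (already used in part (1)) to extract the extra $\sqrt{\kappa}$.
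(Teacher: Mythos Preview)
Your arguments for parts (1) and (2) are correct and coincide with the paper's: for (1) the paper conditions on $\{\tilde X^i(t_k)\}_i$ (equivalently on $\mathcal F_{t_k}$) and invokes the unbiasedness \eqref{eq:unbiased}; for (2) it uses the pointwise bound $|\mathcal X_k^i|\le 2\|K\|_\infty$ together with the sharper estimate $\mathbb E|Z^i(s)-Z^i(t_k)|^2\le C\kappa^2$ from Lemma~\ref{lmm:hatX_continuity}, exactly as you propose.

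For part (3) your core mechanism (make $Z^i(t_k)$ measurable, average out the Brownian increment on $[t_k,s]$, leave an $O(\kappa)$ drift) is also the paper's, but your execution takes an unnecessary detour. The paper does \emph{not} split $\Delta$ by batch membership and does \emph{not} invoke batch unbiasedness a second time. Its route is more direct: since $Z^i(t_k)$ is $\mathcal F_k$-measurable (where $\mathcal F_k$ already contains $\mathcal C_k$), the tower property plus Cauchy--Schwarz give
\[
\mathbb E\bigl[Z^i(t_k)\cdot\Delta\bigr]\le \mathbb E[|Z^i(t_k)|^2]^{1/2}\,\mathbb E\bigl[\,\bigl|\mathbb E[\Delta\mid\mathcal F_k]\bigr|^2\bigr]^{1/2},
\]
then $\mathbb E[\Delta\mid\mathcal F_k]$ is written as a combination of $\mathbb E[\Delta\tilde K^{ij}\mid\mathcal F_k]$, the Lipschitz bound $|\Delta\tilde K^{ij}|\le L|\Delta\tilde X^i-\Delta\tilde X^j|$ is applied, and finally $|\mathbb E[\Delta\tilde X^\ell\mid\mathcal F_k]|$ is read off directly from the SDE on $[t_k,s]$: the stochastic integral has conditional mean zero, so only the drift survives and is bounded by $C\kappa(1+|\tilde X^\ell(t_k)|^q)$ via polynomial growth of $\nabla V$, boundedness of $K$, and a conditional moment bound coming from \eqref{eq:importantinmoment}. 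The passage from $\mathbb E[|Z^i(t_k)|^2]^{1/2}$ to $\mathbb E[|Z^i(s)|^2]^{1/2}$ is then done exactly as you suggest. In particular, once one conditions on $\mathcal F_k$ the batch $\mathcal C_k$ is fixed, so your proposed second use of unbiasedness has no content at this stage; the $O(\kappa)$ comes solely from the drift-only nature of $\mathbb E[\Delta\tilde X^\ell\mid\mathcal F_k]$.

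One caveat worth flagging: your worry that Lipschitz-only $K$ makes the sharp $\kappa$ delicate is on point, and the paper's written argument passes over the same spot quickly. The pointwise bound $|\Delta\tilde K^{ij}|\le L|\Delta\tilde X^i-\Delta\tilde X^j|$ controls $\mathbb E[|\Delta\tilde K^{ij}|\mid\mathcal F_k]$ by $L\,\mathbb E[|\Delta\tilde X^i-\Delta\tilde X^j|\mid\mathcal F_k]$, which is $O(\sqrt\kappa)$, whereas the quantity the paper actually bounds by $O(\kappa)$ is $|\mathbb E[\Delta\tilde X^\ell\mid\mathcal F_k]|$; strictly speaking these do not compose into $|\mathbb E[\Delta\tilde K^{ij}\mid\mathcal F_k]|=O(\kappa)$ without additional regularity on $K$. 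Your sketch does not close this either (a ``Taylor expansion of the SDE increment'' decomposes $\Delta\tilde X^\ell$, not $K$ of it), so on this step you and the paper are in the same position.
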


\begin{proof}
First, we prove the claim
\begin{equation}\label{eq:firstclaim}
\mathbb{E}\left[Z^{i}\left(t_k\right) \cdot \mathcal{X}_k^i\left(\tilde{X}\left(t_k\right)\right)\right] = 0
\end{equation}
Since at $t_k$, the random batch $\mathcal{C}_k$ is independent of $\tilde{X}^i(t_k)$ and $\hat{X}^{i,i}(t_k)$, and since
\begin{equation*}
\mathbb{E}\left[\mathcal{X}_k^i\left(\tilde{X}\left(t_k\right)\right)\right] = \mathbb{E}\left[\mathbb{E}\left[\mathcal{X}_k^i\left(\tilde{X}\left(t_k\right)\right) \mid \tilde{X}^i(t_k),1\leq i \leq N\right]\right]=0
\end{equation*}
by consistency of the random batch (recall the definition of $\mathcal{X}_k^i$ in the statement of Lemma \ref{eq:deltakai}, and the consistency property in \eqref{eq:unbiased}), we know that the first claim \eqref{eq:firstclaim} holds.

Next, we prove the second claim
\begin{equation}\label{eq:secondclaim}
    \mathbb{E}\left[\left(Z^i(s)-Z^i\left(t_k\right)\right) \cdot \mathcal{X}_k^i(\tilde{X}(s))\right]\leq C\kappa.
\end{equation}
Since $K(\cdot)$ is assumed to be bounded, we have
\begin{equation*}
\begin{aligned}
    \mathbb{E}\left[\left(Z^i(s)-Z^i\left(t_k\right)\right) \cdot \mathcal{X}_k^i(\tilde{X}(s))\right]
    &= \mathbb{E}\left[\left(Z^i(s)-Z^i\left(t_k\right)\right) \cdot \mathcal{X}_k^i(\tilde{X}(s))\right]\\
    &\leq 2 \|K\|_{\infty} \mathbb{E}\left|Z^i(s)-Z^i\left(t_k\right) \right|
\end{aligned}
\end{equation*}
By Lemma \ref{lmm:hatX_continuity}, 
\begin{equation*}
    \mathbb{E}\left|Z^i(s)-Z^i\left(t_k\right) \right|^2 \leq C\kappa^2,
\end{equation*}
where $C>0$ is independent of $s$, $k$, $i$, $N$, $p$. Hence by H\"older's inequality, the second claim \eqref{eq:secondclaim} holds.

Finally, we prove the third claim

\begin{equation}\label{eq:thirdclaim}
    \mathbb{E}\left[Z^i(t_k) \cdot\left(\mathcal{X}_k^i \left( \tilde{X}(s)\right)-\mathcal{X}_k^i(\tilde{X}(t_k))\right)\right] \leq C\mathbb{E}\left[|Z^i(s)|^2\right]^{\frac{1}{2}} \kappa + C\kappa^2.
\end{equation}
By Cauchy-Schwarz inequality,
\begin{multline}\label{eq:targetterm}
     \mathbb{E}\left[Z^i(t_k) \cdot\left(\mathcal{X}_k^i \left( \tilde{X}(s)\right)-\mathcal{X}_k^i(\tilde{X}(t_k))\right)\right]\\
     \leq \mathbb{E}\left[\left|Z^i(t_k)\right|^2 \right]^{\frac{1}{2}}  \mathbb{E}\left[\left|\mathbb{E}\left[\mathcal{X}_k^i \left( \tilde{X}(s)\right)-\mathcal{X}_k^i(\tilde{X}(t_k)) \mid \mathcal{F}_k\right] \right|^2 \right]^{\frac{1}{2}},
\end{multline}
where $\mathcal{F}_k := \sigma\left(\tilde{X}^i(0), \tilde{B}^i(s), \mathcal{C}_{k'}: s\leq t_k, k' \leq k, 1\leq i\leq N  \right)$.
By definition,
\begin{equation*}
    \mathbb{E}\left[\mathcal{X}_k^i \left( \tilde{X}(s)\right)-\mathcal{X}_k^i(\tilde{X}(t_k)) \mid \mathcal{F}_k\right] = \frac{1}{p-1}\sum_{j \in \mathcal{C}_k,j\neq i}\mathbb{E}\left[\Delta \tilde{K}^{ij}\mid \mathcal{F}_k\right] - \frac{1}{N-1}\sum_{j\neq i}\mathbb{E}\left[\Delta \tilde{K}^{ij}\mid \mathcal{F}_k\right],
\end{equation*}
where
\begin{equation*}
    \Delta \tilde{X}^i := \tilde{X}^i(s) - \tilde{X}^i(t_k),\quad 1\leq i \leq N,
\end{equation*}
and
\begin{equation}\label{eq:deltaK}
\begin{aligned}
    \Delta \tilde{K}^{ij} := K(\tilde{X}^j(s) - \tilde{X}^i(s)) - K(\tilde{X}^j(t_k) - \tilde{X}^i(t_k))
    \leq L\left|\Delta \tilde{X}^i - \Delta \tilde{X}^j \right|.
\end{aligned}
\end{equation}
Recall the time evolution of $\tilde{X}^i$ ($i \in \mathcal{C}_k$) in RBM-r \eqref{eq:RBM-rintro} for $s \in [t_k,t_{k+1})$:
\begin{equation*}
    \tilde{X}^i(s) = \tilde{X}^i(t_k)  -\int_{t_k}^s \nabla V(\tilde{X}^i(u)) du + \int_{t_k}^s  \frac{1}{p-1} \sum_{j\in \mathcal{C}_k,j\neq i} K(\tilde{X}^j(u) - \tilde{X}^i(u))du + \int_{t_k}^s \sigma d\tilde{B}^i_u 
\end{equation*}
Taking the conditional expectation, since $\nabla V(\cdot)$ grows at most polynomially, and since $K(\cdot)$ is assumed to be bounded, we have 
\begin{equation*}
\begin{aligned}
    \left|\mathbb{E}\left[\Delta \tilde{X}^{i}\mid \mathcal{F}_k\right]\right| &= \left|\int_{t_k}^s \mathbb{E}\left[\nabla V(\tilde{X}^i(u))\mid \mathcal{F}_k\right] du + \int_{t_k}^s \frac{1}{p-1} \sum_{j\in \mathcal{C}_k,j\neq i} K(\tilde{X}^j(u) - \tilde{X}^i(u))du\right|\\
    &\leq  C\left(\int_{t_k}^s \mathbb{E}\left[|\tilde{X}^i(u)|^q \mid \mathcal{F}_k\right]du + \kappa\right).
\end{aligned}
\end{equation*}
In the proof of Lemma \ref{lmm:moment} below (see \eqref{eq:importantinmoment}), we have 
\begin{equation*}
    \frac{d}{ds}\mathbb{E}\left[\left|\tilde{X}^{i}\right|^q \mid \mathcal{F}_k\right]\leq -C_1' \mathbb{E}\left[\left|\tilde{X}^{i}\right|^q \mid \mathcal{F}_k\right] + C_2'
\end{equation*}
for some $C_1'$, $C_2' > 0$ independent of $i$, $s$, $k$, $N$, $p$. Therefore for any $s \in [t_k,t_{k+1})$,
\begin{equation*}
    \mathbb{E}\left[|\tilde{X}^i(s)|^q \mid \mathcal{F}_k\right] \leq C_1 \left|\tilde{X}^i(t_k) \right|^q + C_2
\end{equation*}
for some $C_1$, $C_2 > 0$ independent of $i$, $s$, $k$, $N$, $p$. So we have the following estimate for $\mathbb{E}\left[\Delta \tilde{X}^{i}\mid \mathcal{F}_k\right]$:
\begin{equation}\label{eq:deltaX}
    \mathbb{E}\left[\Delta \tilde{X}^{i}\mid \mathcal{F}_k\right] \leq C\kappa\left(1 +  \left|\tilde{X}^i(t_k) \right|^q \right)
\end{equation}
for some $C > 0$ independent of $i$, $t$, $k$, $N$, $p$.

Combining \eqref{eq:targetterm}, \eqref{eq:deltaK}, \eqref{eq:deltaX}, and the uniform moment bound for $\tilde{X}$ in Lemma \ref{lmm:moment}, we have
\begin{equation*}
    \mathbb{E}\left[Z^i(t_k) \cdot\left(\mathcal{X}_k^i \left( \tilde{X}(s)\right)-\mathcal{X}_k^i(\tilde{X}(t_k))\right)\right]  \leq C\mathbb{E}\left[|Z^i(t_k)|^2\right]^{\frac{1}{2}} \kappa,
\end{equation*}
where $C$ is a positive constant independent of $s$, $k$, $i$, $\kappa$, $N$, $p$, $T$. Finally, by Lemma \ref{lmm:hatX_continuity},
\begin{equation*}
    \mathbb{E}\left|Z^i(s)-Z^i\left(t_k\right) \right|^2 \leq C\kappa^2.
\end{equation*}
Hence, we conclude that 
\begin{equation*}
    \mathbb{E}\left[Z^i(t_k) \cdot\left(\mathcal{X}_k^i \left( \tilde{X}(s)\right)-\mathcal{X}_k^i(\tilde{X}(t_k))\right)\right]  \leq C\mathbb{E}\left[|Z^i(s)|^2\right]^{\frac{1}{2}} \kappa + C\kappa^2,\quad \forall s \in [t_k,t_{k+1}),
\end{equation*}
where $C$ is a positive constant independent of $s$, $k$, $i$, $\kappa$, $N$, $p$, $T$.
\end{proof}

\section{Random Batch Method with replacement (RBM-r) and convergence analysis}\label{sec:mainresult}
In this section, we present the main result of this paper: convergence analysis for the Random Batch Method with replacement (RBM-r). In what follows, we first give the assumption required for convergence, and then present our main theorem (Theorem \ref{eq:thm:mainthm}), followed by some corollaries and discussions. At the end of this section, we give a proof of Theorem \ref{eq:thm:mainthm}, based on the lemmas in Section \ref{sec:lemmas}. To facilitate understanding, we begin with a brief summary of our proof before proceeding to the details.

Recall the following first-order interacting particle system (IPS): 
\begin{equation}\label{eq:IPSoriginal}
dX^i = -\nabla V(X^i) dt + \frac{1}{N-1}\sum_{j\neq i}K\left(X^j - X^i \right) dt + \sigma dB^i,\quad X^i|_{t=0} = X_0^i, \quad 1\leq i \leq N,
\end{equation}
where $X^i \in \mathbb{R}^d$ ($1\leq i \leq N$), $V: \mathbb{R}^d \rightarrow \mathbb{R}$ is the external potential, $K: \mathbb{R}^d \rightarrow \mathbb{R}^d$ is the interaction kernel, $B^i$ ($1 \leq i \leq N$) are independent Brownian motions in $\mathbb{R}^d$, and $\sigma \geq 0$ is the constant volatility. As introduced earlier in Section \ref{sec:intro}, denoting $t_k := k\kappa$ for $k = 0,1,2,\dots$ (where $\kappa$ the constant time step), choosing a random batch $\mathcal{C}_k$ with batch size $p$ at each $t = t_k$, the RBM-r algorithm is given by
\begin{equation}\label{eq:RBM-roriginal}
\begin{aligned}
    &\tilde{X}^i|_{t=0} = \tilde{X}_0^i,\\
    &d\tilde{X}^i = -\nabla V(\tilde{X}^i) dt + \frac{1}{p-1}\sum_{j\in \mathcal{C}_k,j\neq i}K\left(\tilde{X}^j - \tilde{X}^i \right) dt + \sigma d\tilde{B}^i,\quad i \in \mathcal{C}_k,\quad t \in [t_k,t_{k+1}),\\
    &d\tilde{X}^i = 0,\quad i \notin \mathcal{C}_k,\quad t \in [t_k,t_{k+1})
\end{aligned}
\end{equation}
In our result, we assume that IPS and RBM-r share the same initial state (i.e. $X_0^i = \tilde{X}_0^i$ for all $1 \leq i \leq N$). With other regular conditions assumed, we prove that for suitable running times (which will be further discussed below in Remark \ref{rmk:pseudotime}), as $\kappa$ tends to zero, the Wasserstein-2 distance between the first marginal distributions of these two systems converges to zero with an explicit convergence rate.

\subsection{Assumptions and statement of the main result}\label{sec:assresult}

The following assumptions for $V(\cdot)$ and $K(\cdot)$ are required for the proof of the convergence: 
\begin{assumption}\label{ass}
\begin{enumerate}
    \item $V(\cdot)$ is $\lambda$-strongly convex on $\mathbb{R}^d$ i.e. $x \mapsto V(x)-\frac{\lambda}{2}|x|^2$ is convex for some $\lambda > 0$, and $\nabla V$ has polynomial growth (i.e. $|\nabla V(x)|\leq C\left(1+|x|^q\right)$ for some $q>0$ ).
    \item $K(\cdot)$ is bounded and Lipschitz on $\mathbb{R}^d$ with Lipschitz constant $L$.
    \item $\lambda>2 L$.
\end{enumerate}
\end{assumption}
A simple example satisfying Assumption \ref{ass} is: $V(x) = 5 |x|^2$, and $K(x) = -\tfrac{x}{|x|^2} \tfrac{|x|^2}{(\delta^2 + |x|^2)} = - \frac{x}{\delta^2 + |x|^2} $ for some $\delta > 0$. Note that the interaction kernel of this kind is of practical use, since it can be viewed as the regularized version of some singular kernels in many models such as the Keller-Segel chemotaxis system (see for instance, Section 2.2 of \cite{wang2024deepparticle}).
We also remark here that although the convergence rate we obtained (i.e. $O(\kappa^{\frac{1}{4}})$) is worse than that of RBM-1 in \cite{jin2020random} (i.e. $O(\kappa^{\frac{1}{2}})$), our assumption here is weaker than that in \cite{jin2020random}. Actually, due to lower order of Taylor's expansion required in our analysis, we do not need (1) polynomial growth of $\nabla^2 V(\cdot)$, (2) Lipschitz of $\nabla K(\cdot)$ in our result, which are required in \cite{jin2020random}.

With Assumption \ref{ass}, we are able to obtain the following theorem, stating the convergence of RBM-r under Wasserstein-2 distance with an explicit rate.

\begin{theorem}\label{eq:thm:mainthm}
Fix $T>0$. Consider the interaction particle system (IPS) defined in \eqref{eq:IPSoriginal} and the corresponding Random Batch Method with replacement (RBM-r) defined in \eqref{eq:RBM-roriginal}. Let $\rho_t^{(1)}$, $\tilde{\rho}_{t}^{(1)}$ be the first marginal distribution at time $t$ for IPS and RBM-r, respectively. Suppose Assumption \ref{ass} holds. Suppose $X_0^i = \tilde{X}_0^i$ for all $1 \leq i \leq N$. Then there exists $\kappa_0>0$  such that for all $\kappa < \kappa_0$, it holds
\begin{equation}
    \sup_{0\leq t\leq T}W_2\left(\tilde{\rho}_{\bar{t}}^{(1)},\rho_t^{(1)}\right) \leq C\sqrt{1+T}\kappa^{\frac{1}{4}},
\end{equation}
where the positive constant $C$ is independent of $T$, $N$, $p$ and $\kappa$, and $\bar{t}:= \tfrac{N}{p}t$ denotes the pseudo-time for any physical time $t \in [0,T]$.
\end{theorem}

\begin{remark}[Discussion on different running times]\label{rmk:pseudotime}
Here we discuss a bit the reason why the two dynamics IPS and RBM-r should be evaluated under different running times. Recall that we evaluate IPS at $t$ while RBM-r at $\tfrac{N}{p}t$. This is because RBM-r is in fact moving forward along a pseudo-time axis.

One possible approach to understanding this is to compare the algorithm structures of RBM-r and RBM-1. In detail, as has been proved in \cite{jin2020random}, for the same running time $t$, the Random Batch Method without replacement (RBM-1) converges to IPS as $\kappa$ tends to zero. Hence, it is reasonable that its double-loop variant introduced near \eqref{eq:RBMr'}, should converge to IPS after the same running time $t$. Note that in both RBM-1 and its double loop variant \eqref{eq:RBMr'}, in each time interval, we update the system $\tfrac{N}{p}$ times. However, as a single-loop equivalent form, the dynamics in RBM-r is only updated once in each time interval. Therefore, compared with the time for RBM-1 and IPS, the time for RBM-r should be considered as a pseudo one with the scaling constant $\tfrac{N}{p}$.

Another approach to understanding this pseudo-time is from the $L^2$ weak law of large number. Precisely, for the original interacting particle system, suppose the $i$-th particle moves forward for time t. Meanwhile, for RBM-r, in each time interval of length $\kappa$, the probability for the $i$-th particle to move forward is $p/N$ (otherwise, it is frozen). Consequently, by the $L^2$ weak law of large number, if one supposes the $i$-th particle to move forward for time $t$, one should run the RBM-r for time $\tfrac{N}{p}t$.

\end{remark}

\begin{remark}[Discussion on the strong convexity condition in Assumption \ref{ass}]
In many related literature, the confining condition of the drift term (the strong convexity condition for the external potential $V(\cdot)$) is used to make the estimate uniform-in-time. However, under the current assumptions and proof framework, the error bound is still dependent of the length of the time interval $T$. Hence, it is straightforward to consider whether we still have short-time convergence without the strong convexity assumption. In fact, going through the proof, the only places where we use the strong convexity condition are: (1) the first term in \eqref{eq:combine1}; (2) the estimation of the moments in Lemma \ref{lmm:moment}. Hence, it is possible to replace this strong convexity condition with some less restrictive ones such as the Lipschitz condition for $\nabla V(\cdot)$. Then, we can still obtain the convergence following exactly the same proof, and the dependency of $T$ becomes exponential growth. This means, one positive effect of this strong convexity assumption is reducing the dependency of $T$ from exponential to only sublinear in the current result established in Theorem \ref{eq:thm:mainthm}.

\end{remark}

In \cite{jin2020random}, under a bit stronger assumptions, the convergence rate of RBM-1 was proved to be $O(\kappa^{\frac{1}{2}})$,which is better than our $O(\kappa^{\frac{1}{4}})$ rate for RBM-r. Technically, this is because when applying the $L^2$ weak law of large number, the H\"older's continuity property of differential equations (see Lemma \ref{lmm:hatX_continuity} below) is currently unavoidable. Therefore, the existence of Brownian motion would result in a worse convergence rate. However, if we instead consider the deterministic interaction particle system with the diffusion coefficient $\sigma = 0$, then the convergence rate can be improved.
We summarize this improvement in the following Theorem:
\begin{theorem}[Improved convergence rate for when $\sigma = 0$]\label{coro:sigma0}
Fix $T>0$. Consider the interaction particle system (IPS) defined in \eqref{eq:IPSoriginal} and the corresponding Random Batch Method with replacement (RBM-r) defined in \eqref{eq:RBM-roriginal}. Let $\rho_t^{(1)}$, $\tilde{\rho}_{t}^{(1)}$ be the first marginal distribution at time $t$ for IPS and RBM-r, respectively. Suppose Assumption \ref{ass} holds. Suppose $X_0^i = \tilde{X}_0^i$ for all $1 \leq i \leq N$. Suppose further that 
$$\sigma = 0.$$ 
Then there exists $\kappa_0>0$  such that for all $\kappa < \kappa_0$, it holds
\begin{equation}
    \sup_{0 \leq t \leq T }W_2\left(\tilde{\rho}_{\bar{t}}^{(1)},\rho_t^{(1)}\right) \leq C\sqrt{1+T}\kappa^{\frac{1}{2}},
\end{equation}
where the positive constant $C$ is independent of $T$, $N$, $p$ and $\kappa$, and $\bar{t}:= \tfrac{N}{p}t$ denotes the pseudo-time for any physical time $t \in [0,T]$.
\end{theorem}

The proof of Theorem \ref{coro:sigma0} is identical to that of Theorem \ref{eq:thm:mainthm}. The only difference is the estimation for the H\"older's continuity property in Lemma \ref{lmm:hatX_continuity}. Indeed, as we will see in Lemma \ref{lmm:hatX_continuity}, let $X_t$ solves a SDE driven by the Brownian motion with constant volatility and bounded / Lipschitz drift, then for any $t_1$, $t_2$, the H\"older's continuity property is given by
\begin{equation}
     \mathbb{E}|X(t_1) - X(t_2)|^2\lesssim |t_2-t_1|^2 + \sigma^2 |t_2 - t_1|.
\end{equation}
Obviously, when there is no diffusion (i.e. $\sigma \equiv 0$), the SDE is changed into ODE, and the H\"older's continuity property is then improved to
\begin{equation}
     \mathbb{E}|X(t_1) - X(t_2)|^2\lesssim |t_2-t_1|^2.
\end{equation}
This then leads to the improvement of the final convergence rate.

\subsection{Proof of the main result}\label{sec:proof}

In this section, we give the proof of our main result, Theorem \ref{eq:thm:mainthm}. To begin with, we give a high-level overview of our proof. 

As mentioned in Section \ref{sec:intro}, our estimate for $W_2(\tilde{\rho}_{\bar{t}}^{(1)},\rho_t^{(1)})$ (recalling $\bar{t} := \tfrac{N}{p}t$) is based on a coupling method. Let $\tilde{X}$, $X$ be two realizations of the RBM-r and IPS dynamics coupled in some specific way to be described below. To our knowledge, a direct calculation for $d(\tilde{X},X)$ for some suitable distance $d(\cdot,\cdot)$ is difficult. This then motivates us to introduce an intermediate dynamics IPS', which consists of $N$ copies of IPS after random time change. 


In detail, recall the construction of the coupled dynamics RBM, IPS' and IPS defined in \eqref{eq:RBM-rintro} - \eqref{eq:IPSintro}. Also recall that in the construction, RBM-r \eqref{eq:RBM-rintro} and IPS' \eqref{eq:IPS'intro} share the same random batch $\mathcal{C}_k$ at each $k$-th interval. Moreover, the Brownian motions above are coupled in the way that for any $1 \leq \ell \leq N$, $1 \leq i \leq N$, $n = 0,1,2,\dots$, $\Delta t \in [0,\kappa)$,
\begin{equation}\label{eq:coupleBM}
\begin{aligned}
    &\hat{B}^{\ell,i}(\tau_n^{i} \kappa + \Delta t) = B^\ell(n\kappa+\Delta t),\\
    &\tilde{B}^i(\tau_n^{i} \kappa + \Delta t) = B^{i}(n\kappa + \Delta t),
\end{aligned}
\end{equation}
where for each fixed $i$, the stopping time $\tau_n^i$ is the $n$-th time the particle with index $i$ is chosen into the batch:
\begin{equation}\label{eq:defmni}
    \tau_n^i := \inf\left\{K: \sum_{k=0}^K \textbf{1}_{\{i \in \mathcal{C}_k \}} > n \right\}.
\end{equation}
Note that from the way we couple the Brownian motion in \eqref{eq:coupleBM}, it is clear that $\tilde{X}^i$ in RMB-r \eqref{eq:RBM-rintro} and $\hat{X}^{i,i}$ in IPS' \eqref{eq:IPS'intro} are synchronously coupled in that
\begin{equation*}
    \tilde{B}^i \equiv \hat{B}^{i,i}
\end{equation*}
for each $i$. Moreover, from the definition of the RBM-r and the way we couple the Brownian motions, we see that IPS' \eqref{eq:IPS'intro} defines $N$ pairwise independent interacting particle systems: for each fixed $i$, all particles $\hat{X}^{\ell,i}$ ($1\leq \ell \leq N$) are moving forward (with the same interaction force) or staying still simultaneously according to whether the event $\{i \in \mathcal{C}_k\}$ happens at $k$-th interval. Consequently, we find that each $N$-body system $(\hat{X}^{\ell,i})_{\ell=1}^N$ is indeed a random time change from $(X^\ell)_{\ell=1}^N$ in IPS \eqref{eq:IPSintro}, induced by index $i$ in the sense that
\begin{equation}\label{eq:comparetwoIPS}
    \hat{X}^{\ell,i}(\tau_n^i \kappa + \Delta t) = X^\ell(n\kappa + \Delta t), \quad \forall \,\Delta t\in [0,\kappa), \quad n = 0,1,2,\dots,\quad 1\leq \ell \leq N.
\end{equation}
This random time change relation is essential in the later analysis. Another key observation for such construction is that it preserves the exchangeability of $Z^i := \hat{X}^{i,i} - \tilde{X}^i$, which means $Z^1, \dots, Z^N$ has the same distribution, Therefore, in the practical calculation, we only make use of the diagonal elements of IPS' (namely, $\hat{X}^{i,i}$ for $1 \leq i \leq N$) for comparisons.

After constructing the coupling above, for $0\leq t \leq T$, we estimate the distance between IPS and RBM-r by the following two main steps. In what follows, we will consider $t \in [0,T]$ and assume that $\kappa$ divides $t$ for simplicity.
\begin{itemize}
    \item Compare IPS and IPS': estimate $\mathbb{E}\left|\hat{X}^{i,i}(\tfrac{N}{p}t)  - X^i(t)\right|^2$.
    \item Compare IPS' and RBM-r: estimate $\mathbb{E}\left|\tilde{X}^i(\tfrac{N}{p}t) - \hat{X}^{i,i}(\tfrac{N}{p}t) \right|^2$.
\end{itemize}

To compare IPS and IPS', we will make use of random time change relation \eqref{eq:comparetwoIPS}, the continuity property proved in Lemma \ref{lmm:hatX_continuity} and the $L^2$ weak law of large numbers. After careful calculation, we will arrive at
\begin{equation*}
    \mathbb{E}\left|\hat{X}^{i,i}(\tfrac{N}{p}t)  - X^i(t)\right|^2 = \mathbb{E}\left|\hat{X}^{i,i}(\tfrac{N}{p}t)  - \hat{X}^{i,i}(\kappa \tau^i_{n_t})\right|^2 \lesssim \sqrt{\kappa},
\end{equation*}
where $n_t := t / \kappa$ (recall that we assume $\kappa$ divides $t$).

Comparing IPS' and RBM-r is more complicated. Define $Z^i = \tilde{X}^i - \hat{X}^{i,i}$ for $1\leq i \leq N$. Our strategy is to decompose $\frac{d}{ds}\mathbb{E}|Z^i(s)|^2$ into the following three parts:
\begin{equation*}
\begin{aligned}
    \frac{d}{ds}\mathbb{E}\left|Z^i(s) \right|^2 &= -\mathbb{E} \left[Z^i(s) \cdot \left(\nabla V\left(\tilde{X}^i(s) \right) - \nabla V\left(\hat{X}^{i,i}(s) \right)\right)\right]\\
    &\quad+ \frac{1}{N-1}\sum_{j \neq i}\mathbb{E}\left[Z^i(s) \cdot \left( K \left(\tilde{X}^j(s) - \tilde{X}^i(s) \right) - K\left( \hat{X}^{j,i}(s) - \hat{X}^{i,i}(s)\right)\right)\right]\\
    &\quad+ \mathbb{E}\left[Z^i(s) \cdot \mathcal{X}_k^i(\tilde{X}(s))\right],
\end{aligned}
\end{equation*}
where
\begin{equation*}
    \mathcal{X}_k^i\left(\tilde{X}(s) \right):= \frac{1}{p-1}\sum_{j\in \mathcal{C}_k,j\neq i}K\left(\tilde{X}^j(s) - \tilde{X}^i(s) \right) - \frac{1}{N-1}\sum_{j\neq i}K\left(\tilde{X}^j(s) - \tilde{X}^i(s) \right).
\end{equation*}
Employing the strong convexity of $V(\cdot)$, it is straightforward to estimate the first term on the right-hand side of \eqref{eq:splitintro} above. We handle the second term above by making use of the Lipschitz assumption for the interaction kernel $K(\cdot)$, the exchangeability of $Z$, and the $L^2$ weak law of large numbers. See more details below and in Lemma \ref{lmm:exchange}. The third term above is the error induced by the random batch. We would estimate it via the first-order Taylor's expansion and make use of the consistency of random batch $\mathcal{C}_k$ as well as the Lipschitz assumption for $K(\cdot)$. See more details below and in Lemma \ref{eq:deltakai}.

\begin{proof}[Proof of Theorem \ref{eq:thm:mainthm} assuming Lemma \ref{lmm:moment} - \ref{eq:deltakai}]
Fix $T > 0$. Let $\mathcal{C}_k$ be the random batch at $k$-th time interval $[t_k,t_{k+1})$ ($t_k := k\kappa$ with $\kappa$ being the constant time step).

\textbf{STEP 1: construction of coupling.}

As discussed before, the first step is to construct suitable coupling, and our main approach is introducing an intermediate dynamics IPS'. Recall the dynamics triple $\left(\tilde{X}, \hat{X}, X\right)$ with the same initial defined in \eqref{eq:RBM-rintro} - \eqref{eq:IPSintro}. Also recall that RBM-r \eqref{eq:RBM-rintro} and IPS' \eqref{eq:IPS'intro} share the same random batch $\mathcal{C}_k$ at each $k$-th interval. Furthermore, recalling the definition of $\tau_n^i$ in \eqref{eq:defmni}, the Brownian motions above satisfy that for any $1 \leq \ell \leq N$, $1 \leq i \leq N$, $n = 0,1,2,\dots$, $\Delta t \in [0,\kappa)$,
\begin{equation}
\begin{aligned}
    &\hat{B}^{\ell,i}(\tau_n^{i} \kappa + \Delta t) = B^\ell(n\kappa+\Delta t),\\
    &\tilde{B}^i(\tau_n^{i} \kappa + \Delta t) = B^{i}(n\kappa + \Delta t).
\end{aligned}
\end{equation}
In particular,
\begin{equation}
    \tilde{B}^i \equiv \hat{B}^{i,i}
\end{equation}
for each $i$. Consequently, the following random time change relation holds
\begin{equation}
    \hat{X}^{\ell,i}(\tau_n^i \kappa + \Delta t) = X^\ell(n\kappa + \Delta t), \quad \forall \,\Delta t\in [0,\kappa), \quad n = 0,1,2,\dots,\quad 1\leq \ell \leq N.
\end{equation}
Now with the constructed dynamics triple $\left(\tilde{X}, \hat{X}, X\right)$, for fixed $t \in [0,T]$, we are ready to compare RBM-r \eqref{eq:RBM-rintro} at time $\tfrac{N}{p}t$ and IPS \eqref{eq:IPSintro} at time $t$. In the following we would consider $t \in [0,T]$ and assume $\kappa$ divides $t$ for simplicity. We estimate by two main steps:

\textbf{STEP 2: compare IPS \eqref{eq:IPSintro} and IPS' \eqref{eq:IPS'intro} - estimate $\mathbb{E}\left|\hat{X}^{i,i}(\tfrac{N}{p}t)  - X^i(t)\right|^2$.}

Using the random time change relation \eqref{eq:comparetwoIPS}, and denoting $n_t := t/\kappa$ (recall that we assume $\kappa$ divides $t$), we know that
\begin{equation*}
    \mathbb{E}\left|\hat{X}^{i,i}(\tfrac{N}{p}t)  - X^i(t)\right|^2 = \mathbb{E}\left|\hat{X}^{i,i}(\tfrac{N}{p}t)  - \hat{X}^{i,i}(\kappa \tau^i_{n_t})\right|^2.
\end{equation*}
Denote 
\begin{equation}
    \hat{b}^i_s := -\nabla V(X^{i,i}(s)) + \frac{1}{N-1}\sum_{j \neq i}K(\hat{X}^{j,i}(s) - \hat{X}^{i,i}(s)),
\end{equation}
By definition, since in the time interval $[t_k,t_{k+1})$, $d\hat{X}^{i,i} = 0$ if $i \notin \mathcal{C}_k$, we have
\begin{equation}
    \mathbb{E}\Big|\hat{X}^{i,i}(\tfrac{N}{p}t)  - \hat{X}^{i,i}(\kappa \tau^i_{n_t})\Big|^2 = \mathbb{E}\Big|\int_{\kappa \tau^i_{n_t}}^{\tfrac{N}{p}t} \textbf{1}_{\{i\in \mathcal{C}_{\lfloor s/\kappa \rfloor}\}}(s)(\hat{b}^i(s) ds + \sigma dB_s) \Big|^2.
\end{equation}
Next, we estimate the above by taking into account the time intervals where the dynamics $\hat{X}^{i,i}$ is frozen. This means that the effective time interval in the integral above is in fact less than the total length $\tfrac{N}{p}t - \kappa \tau^i_{n_t}$. In detail, defined the event
\begin{equation}
    A(i,m,M) := \{\text{index i is chosen into the batch m times among M picks} \}\quad 
\end{equation}
with $1 \leq i \leq N$, $0 \leq m \leq M$.
Clearly, the probability of the event $A(i,m,M)$ can be written out explicitly:
\begin{equation}
    P\left(A(i,m,M) \right) = \binom{M}{m} (\tfrac{p}{N})^m(1 - \tfrac{P}{N})^{M-m}.
\end{equation}
Now, using the polynomial bound of $\nabla V(\cdot)$ as well as the boundedness of $K(\cdot)$ in Assumption \ref{ass}, we have
\begin{equation}\label{eq:eqinSTEP2}
\begin{aligned}
    &\quad\mathbb{E}\Big|\hat{X}^{i,i}(\tfrac{N}{p}t)  - \hat{X}^{i,i}(\kappa \tau^i_{n_t})\Big|^2\\
    &=\sum_{n = n_t}^\infty P(\tau^i_{n_t} = n) \mathbb{E}\Big|\int_{n\kappa}^{\tfrac{N}{p}t} \textbf{1}_{i\in \mathcal{C}_{\lfloor s/\kappa \rfloor}}(s)(\hat{b}^i(s) ds + \sigma dB_s)\Big|^2\\
    &\leq \sum_{n = n_t}^\infty P(\tau^i_{n_t} = n)  \Big(\sigma^2\big(\sum_{m=0}^{\tfrac{N}{p}n_t - n} m\kappa P(A(i,m,|\tfrac{N}{p}n_t - n|)) \big)\\
    &\quad+ C\big(\sum_{m=0}^{\frac{N}{p}n_t - n} m\kappa P(A(i,m,|\frac{N}{p}n_t - n|)) \big)^2\Big),
\end{aligned}
\end{equation}
where $C$ is a positive constant independent of $N$, $p$, $\kappa$, $t$, $T$, $i$. Moreover,
\begin{equation}
\begin{aligned}
&\quad\sum_{m=0}^{\tfrac{N}{p}n_t - n} m\kappa P(A(i,m,|\tfrac{N}{p}n_t - n|))\\
&= \sum_{m=0}^{\tfrac{N}{p}n_t - n} m\kappa \binom{|\tfrac{N}{p}n_t - n|}{m} (\tfrac{p}{N})^m(1 - \tfrac{p}{N})^{|\tfrac{N}{p}n_t - n|-m}\\
&=\kappa |\tfrac{N}{p}n_t - n| \tfrac{p}{N}.
\end{aligned}
\end{equation}
Therefore,
\begin{equation}
\mathbb{E}\left|\hat{X}^{i,i}(\tfrac{N}{p}t)  - \hat{X}^{i,i}(\kappa \tau^i_{n_t})\right|^2 \leq \sigma^2 \kappa \tfrac{p}{N}\mathbb{E}|\tau^i_{t_n} - \tfrac{N}{p}n_t| + C \kappa^2 \tfrac{p^2}{N^2}\mathbb{E}|\tau^i_{t_n} - \tfrac{N}{p}n_t|^2
\end{equation}
By Lemma \ref{eq:L2LLN}, the random variable $\tau^i_{n_t}$ has mean $\frac{N}{p}n_t$ and variance $((\frac{N}{p})^2 - \frac{N}{p})n_t$. Consequently, we have
\begin{equation}
\mathbb{E}\left|\hat{X}^{i,i}(\tfrac{N}{p}t)  - \hat{X}^{i,i}(\kappa \tau^i_{n_t})\right|^2 \leq \sigma^2  \kappa \tfrac{p}{N} \sqrt{Var(\tau^i_{n_t})} + C \kappa^2 \tfrac{p^2}{N^2} Var(\tau^i_{n_t}) \leq  \sigma^2\sqrt{T\kappa} + CT\kappa.
\end{equation}
In particular, when $\sigma \equiv 0$,
\begin{equation}
\mathbb{E}\left|\hat{X}^{i,i}(\tfrac{N}{p}t)  - \hat{X}^{i,i}(\kappa \tau^i_{n_t})\right|^2 \leq CT\kappa,
\end{equation}
where $C$ is a positive constant independent of $N$, $p$, $\kappa$, $t$, $T$, $i$.

\textbf{STEP 3: compare IPS' \eqref{eq:IPS'intro} and RBM-r \eqref{eq:RBM-rintro} - estimate $\mathbb{E}\left|\tilde{X}^i(\tfrac{N}{p}t) - \hat{X}^{i,i}(\tfrac{N}{p}t) \right|^2$.}

Define the process 
\begin{equation}\label{eq:Zidef}
    Z^i(s) := \tilde{X}^i(s) - \hat{X}^{i,i}(s)
\end{equation}
for $1 \leq i \leq N$ and $t \geq 0$. By Lemma \ref{lmm:exchange}, $(Z^i)_{i=1}^N$ are exchangeable. Recalling that we couple the Brownian motions $\tilde{B}^i$ and $\hat{B}^{i,i}$ synchronously, for $s \in [t_k,t_{k+1})$, we have
\begin{multline*}
    dZ^i(s) = - \left(\nabla V(\tilde{X}^i(s)) - \nabla V(\hat{X}^{i,i}(s)) \right) ds\\
    + \frac{1}{p-1}\sum_{j\in \mathcal{C}_k,j\neq i}K\left(\tilde{X}^j(s) - \tilde{X}^i(s) \right) ds - \frac{1}{N-1}\sum_{j\neq i}K\left(\hat{X}^{j,i}(s) - \hat{X}^{i,i}(s) \right) ds.
\end{multline*}
In order to estimate $Z^i$, our strategy is to decompose the above by:
\begin{equation*}
\begin{aligned}
    dZ^i(s) &= - \left(\nabla V(\tilde{X}^i(s)) - \nabla V(\hat{X}^{i,i}(s)) \right) ds\\
    &\quad+\left[\frac{1}{p-1}\sum_{j\in \mathcal{C}_k,j\neq i}K\left(\tilde{X}^j(s) - \tilde{X}^i(s) \right) - \frac{1}{N-1}\sum_{j\neq i}K\left(\tilde{X}^j(s) - \tilde{X}^i(s) \right)\right]ds\\
    &\quad + \left[\frac{1}{N-1}\sum_{j \neq i}  K \left(\tilde{X}^j(s) - \tilde{X}^i(s) \right) - \frac{1}{N-1}\sum_{j \neq i}K\left( \hat{X}^{j,i}(s) - \hat{X}^{i,i}(s)\right)\right]ds.
\end{aligned}
\end{equation*}
Intuitively, the first term in the dynamics above is easy to handle since we have assumed strong convexity for $V$ in Assumption \ref{ass}. For the second term, we will estimate it using the consistency of the random batch and the Lipschitz condition for the interaction kernel. For the third term above, we will make use of the exchangeability of $Z^i$, the Lipschitz condition for the interaction kernel, and the $L^2$ weak law of large numbers. Details are presented below.

Recalling the notation
\begin{equation*}
    \mathcal{X}_k^i\left(\tilde{X}(s) \right):= \frac{1}{p-1}\sum_{j\in \mathcal{C}_k,j\neq i}K\left(\tilde{X}^j(s) - \tilde{X}^i(s) \right) - \frac{1}{N-1}\sum_{j\neq i}K\left(\tilde{X}^j(s) - \tilde{X}^i(s) \right),
\end{equation*}
and taking expectation, for $s \in [t_k,t_{k+1})$, we have
\begin{equation}\label{eq:combine1}
\begin{aligned}
    \frac{d}{ds}\mathbb{E}\left|Z^i(s) \right|^2 &= -\mathbb{E} \left[Z^i(s) \cdot \left(\nabla V\left(\tilde{X}^i(s) \right) - \nabla V\left(\hat{X}^{i,i}(s) \right)\right)\right]\\
    &\quad+ \frac{1}{N-1}\sum_{j \neq i}\mathbb{E}\left[Z^i(s) \cdot \left( K \left(\tilde{X}^j(s) - \tilde{X}^i(s) \right) - K\left( \hat{X}^{j,i}(s) - \hat{X}^{i,i}(s)\right)\right)\right]\\
    &\quad+ \mathbb{E}\left[Z^i(s) \cdot \mathcal{X}_k^i(\tilde{X}(s))\right]
\end{aligned}
\end{equation}

Next, we estimate three terms in \eqref{eq:combine1} above separately. For the first term, by strongly convexity of $V(\cdot)$ in Assumption \ref{ass}, 
\begin{equation}\label{eq:combine2}
    -\mathbb{E} \left[Z^i(s) \cdot \left(\nabla V\left(\tilde{X}^i(s) \right) - \nabla V\left(\hat{X}^{i,i}(s) \right)\right)\right] \leq - \lambda \mathbb{E}\left|Z^i(s) \right|^2.
\end{equation}

For the second term in \eqref{eq:combine1}, mainly making use of the Lipschitz condition of $K(\cdot)$ in Assumption \ref{ass}, the exchangeability of $Z^i$, and the $L^2$ weak law of large number, we prove in Lemma \ref{lmm:exchange} below that
\begin{enumerate}
    \item For arbitrary $\delta > 0$ to be determined,
    \begin{multline}\label{eq:2nd1}
    \frac{1}{N-1}\sum_{j \neq i}\mathbb{E}\left[Z^i(s) \cdot \left( K \left(\tilde{X}^j(s) - \tilde{X}^i(s) \right) - K\left( \hat{X}^{j,i}(s) - \hat{X}^{i,i}(s)\right)\right)\right]\\
    \leq 2L\mathbb{E}\left|Z^i(s) \right|^2 + \delta \mathbb{E}\left|Z^i(s) \right|^2 + \frac{1}{4\delta}\frac{1}{N-1}\sum_{j \neq i} \mathbb{E}\left|(\hat{X}^{j,j}(s) - \hat{X}^{j,i}(s) \right|^2,
    \end{multline}
    \item For small $\kappa$, we have
    \begin{equation}\label{eq:2nd2}
        \mathbb{E}\left|\hat{X}^{j,j}(s) - \hat{X}^{j,i}(s) \right|^2 \leq C\sqrt{T\kappa}, \forall s \in [0,T],
    \end{equation}
where $C$ is independent of $i$, $j$, $k$, $N$, $p$, $\kappa$, $T$, $t$.
\end{enumerate}

For the third term in \eqref{eq:combine1}, by consistency of the random batch and the Lipschitz assumption of $K(\cdot)$, we further decompose it into
\begin{multline*}
 \mathbb{E}\left[Z^i(s) \cdot \mathcal{X}_k^{i}(\tilde{X}(s))\right]
 =\mathbb{E}\left[Z^{i}\left(t_k\right) \cdot \mathcal{X}_k^i\left(\tilde{X}\left(t_k\right)\right)\right]\\
 + \mathbb{E}\left[\left(Z^i(s)-Z^i\left(t_k\right)\right) \cdot \mathcal{X}_k^i(\tilde{X}(s))\right]\
 +\mathbb{E}\left[Z^i(t_k) \cdot\left(\mathcal{X}_k^i \left( \tilde{X}(s)\right)-\mathcal{X}_k^i(\tilde{X}(t_k))\right)\right],
\end{multline*}
and prove in Lemma \ref{eq:deltakai} below that
\begin{enumerate}
\item $\mathbb{E}\left[Z^{i}\left(t_k\right) \cdot \mathcal{X}_k^i\left(\tilde{X}\left(t_k\right)\right)\right] = 0,$
\item $\mathbb{E}\left[\left(Z^i(s)-Z^i\left(t_k\right)\right) \cdot \mathcal{X}_k^i(\tilde{X}(s))\right]\leq C\kappa$
\item $\mathbb{E}\left[Z^i(t_k) \cdot\left(\mathcal{X}_k^i \left( \tilde{X}(s)\right)-\mathcal{X}_k^i(\tilde{X}(t_k))\right)\right] \leq C\mathbb{E}\left[|Z^i(s)|^2\right]^{\frac{1}{2}} \kappa + C\kappa^2,$
\end{enumerate}
where $C$ is independent of $i$, $s$, $k$, $N$, $p$. Consequently, by Young's inequality, for arbitrary $\delta' > 0$ to be determined,
\begin{equation*}
    C\mathbb{E}\left[|Z^i(s)|^2\right]^{\frac{1}{2}} \kappa \leq \delta' \mathbb{E}\left[|Z^i(s)|^2\right] + \frac{C^2}{4\delta'}\kappa^2.
\end{equation*}
Therefore,
\begin{equation}\label{eq:3rd}
    \mathbb{E}\left[Z^i(s) \cdot \mathcal{X}_k^{i}(\tilde{X}(s))\right] \leq \delta'\mathbb{E}\left[|Z^i(s)|^2\right]  + \left(\frac{C^2}{4\delta'}+C\right)\kappa^2 + C\kappa,
\end{equation}

Finally, combining \eqref{eq:combine2}, \eqref{eq:2nd1}, \eqref{eq:2nd2} and \eqref{eq:3rd}, and choosing $\delta$, $\delta'$ small enough such that $\delta+\delta'<\lambda - 2L$, we conclude from Gr\"onwall's inequality that for small $\kappa$,
\begin{equation*}
    \mathbb{E}\left|\tilde{X}^i(\tfrac{N}{p}t) - \hat{X}^{i,i}(\tfrac{N}{p}t) \right|^2 
    \leq C(1+\sqrt{T})\sqrt{\kappa}.
\end{equation*}

Eventually, by STEP 2 and STEP 3, we obtain
\begin{equation*}
    \mathbb{E}\left|\tilde{X}^i(\tfrac{N}{p}t) - X^i(t) \right|^2 \leq C(1+\sqrt{T})\sqrt{\kappa}.
\end{equation*}
Consequently,
\begin{equation*}
    \sup_{0\leq t  \leq T}W_2(\tilde{\rho}_{\tfrac{N}{p}t}^{(1)},\rho_t^{(1)}) \leq C(1+T^{\frac{1}{4}})\kappa^{\frac{1}{4}},
\end{equation*}
where all constants $C$ are independent of $T$, $N$, $p$ and $\kappa$.

\end{proof}


\section{Numerical example}
In this section, we perform a simple numerical experiment to showcase the efficiency and the convergence property of the RBM-r algorithm, even in the case of singular interaction kernels. We consider a $N$-particle system with Brownian motion in the following form:
\begin{equation}\label{eq:psnumerical}
    dX_i = -\beta X_i dt + \frac{1}{N}\sum_{j \neq i} \frac{X_i - X_j}{|X_i - X_j|^2} dt + \frac{1}{\sqrt{N}} dB^i,\quad 1 \leq i \leq N.
\end{equation}
Here, $\beta \in \mathbb{R}_{+}$, $X_i \in \mathbb{R}^d$, and $B^i$ are $N$ independent Brownian motions in $\mathbb{R}^d$. In our experiment, we choose $d = 2$, $\beta = 1$, $N = 10^3$. The initial distribution is chosen to be standard Gaussian. We simulate both the original particle system \eqref{eq:psnumerical} and the associated RBM-r dynamics using the forward Euler discretization with a  time step $\kappa_{Euler} = 10^{-2}$. The time step for choosing the random batch is $\kappa = 10^{-1}$. In Figure \ref{fig:particle} -- Figure \ref{fig:distribution2}, we report the simulation results for both RBM-r dynamics and the original dynamics, including the particle position, the density, and the marginal densities. As we can see from the numerical results, even in the case of singular kernels, RBM-r can still perform a relatively good approximation of the original dynamics, with much lower computational cost.

\begin{figure}[htbp]
    \centering
    
    \includegraphics[width=0.6\textwidth]{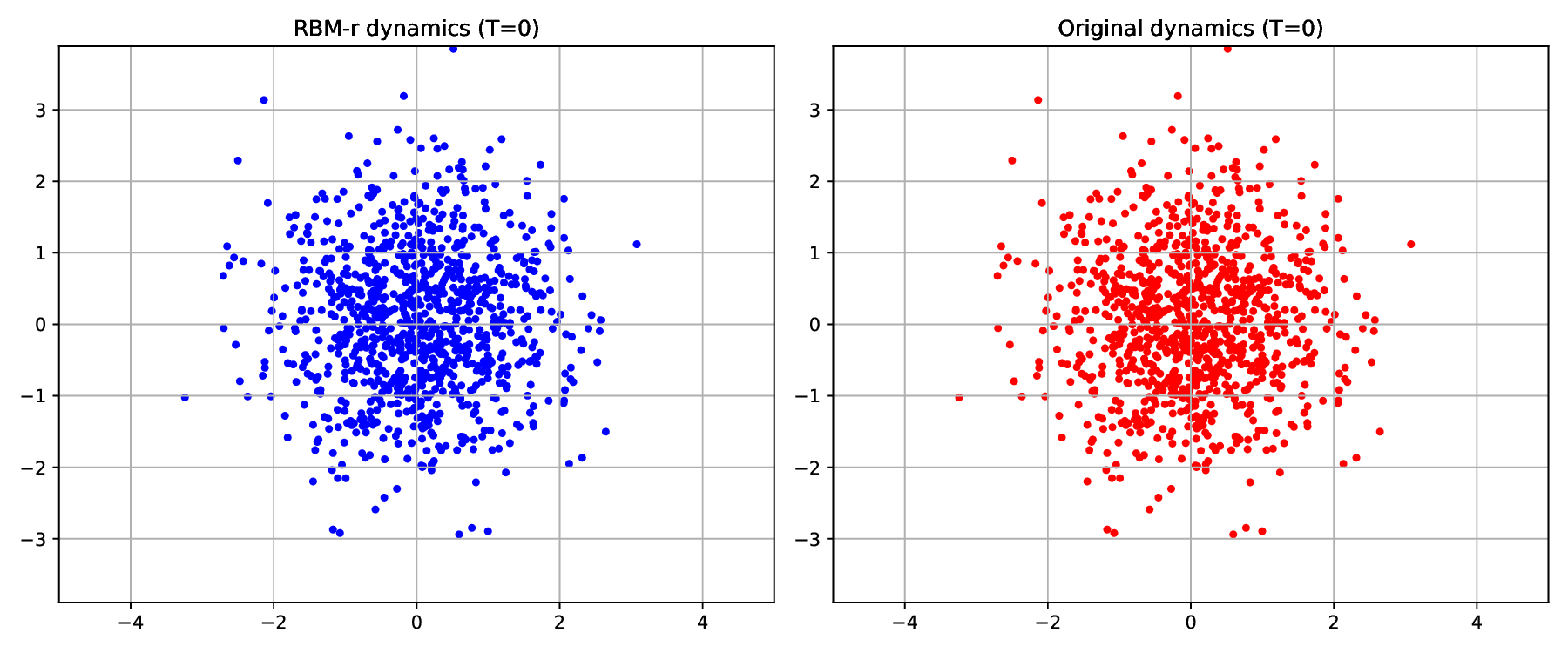}    

    \includegraphics[width=0.6\textwidth]{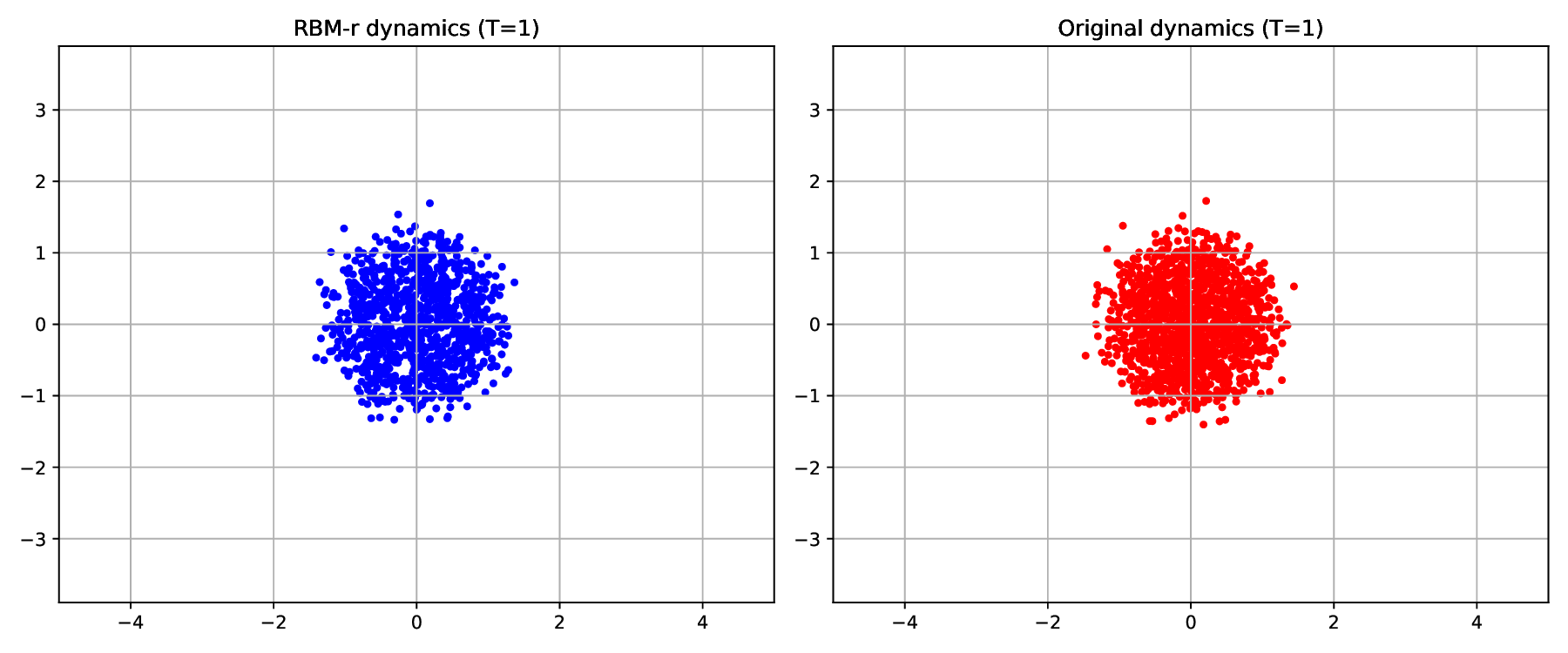}

    \includegraphics[width=0.6\textwidth]{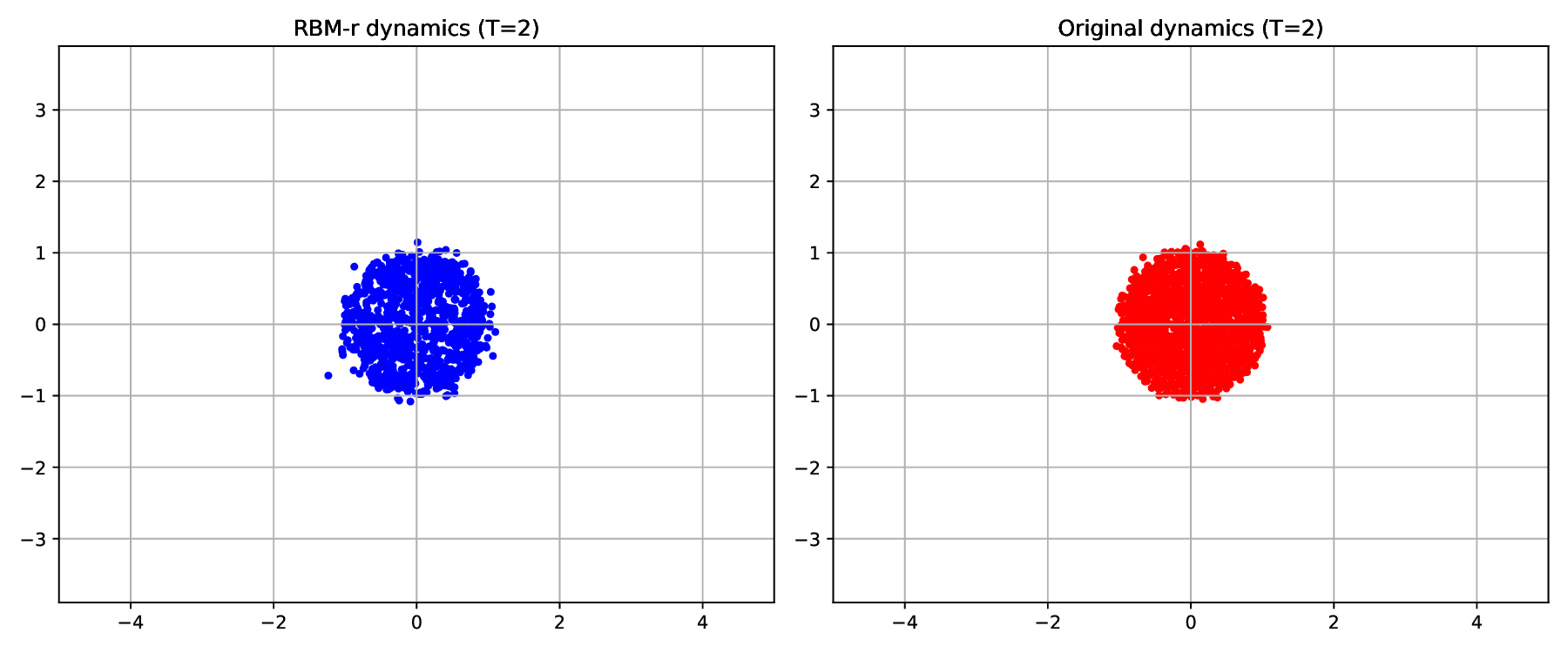}

\caption{Evolution of particle positions of RBM-r dynamics and original dynamics}
\label{fig:particle}
\end{figure}

\begin{figure}[htbp]
    \centering
    
    \includegraphics[width=0.6\textwidth]{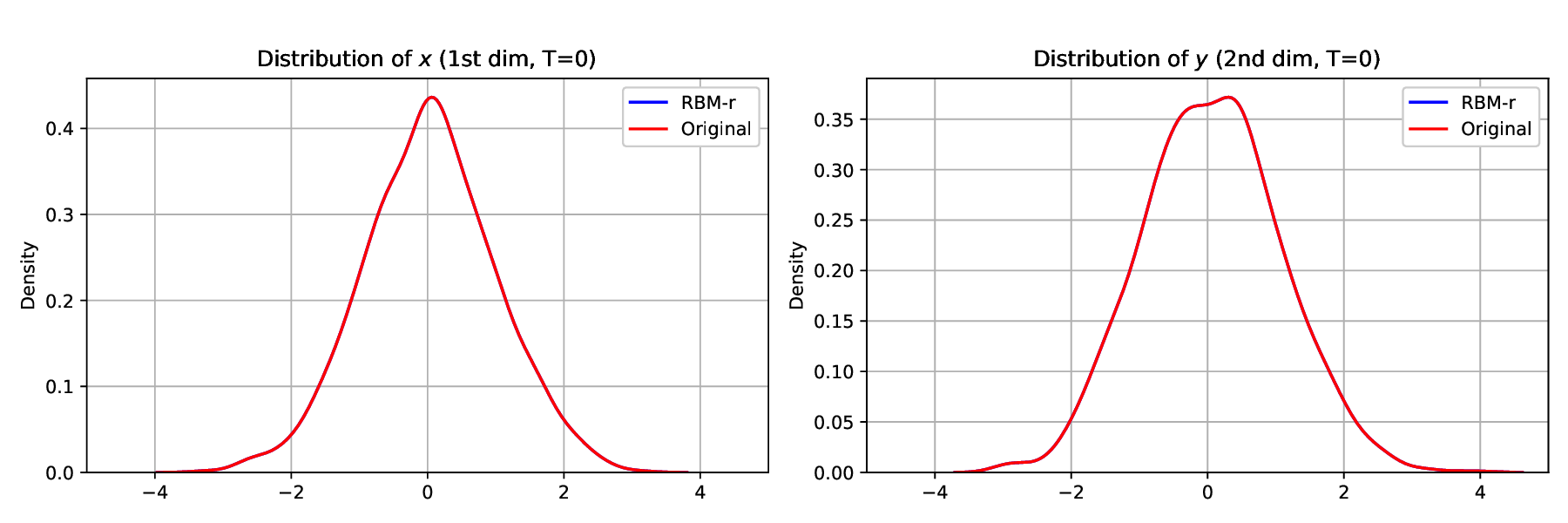}    

    \includegraphics[width=0.6\textwidth]{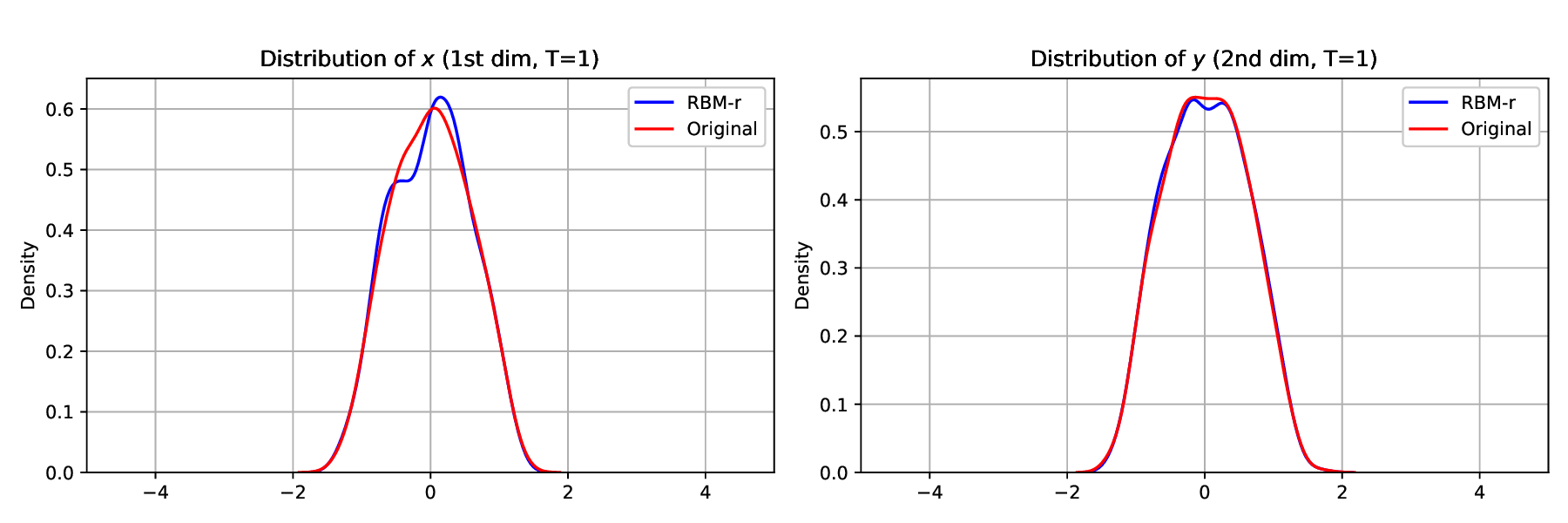}

    \includegraphics[width=0.6\textwidth]{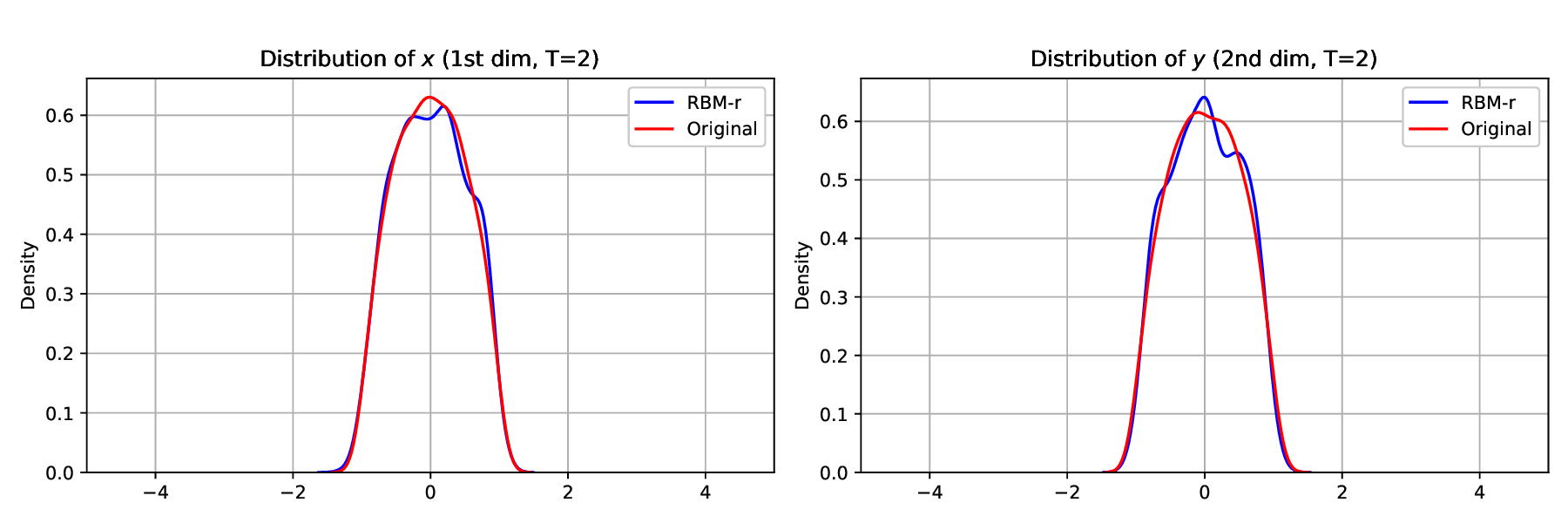}

\caption{Evolution of particle marginal distributions of RBM-r dynamics and original dynamics}
\label{fig:distribution}
\end{figure}

\begin{figure}[htbp]
    \centering
    
    \includegraphics[width=0.6\textwidth]{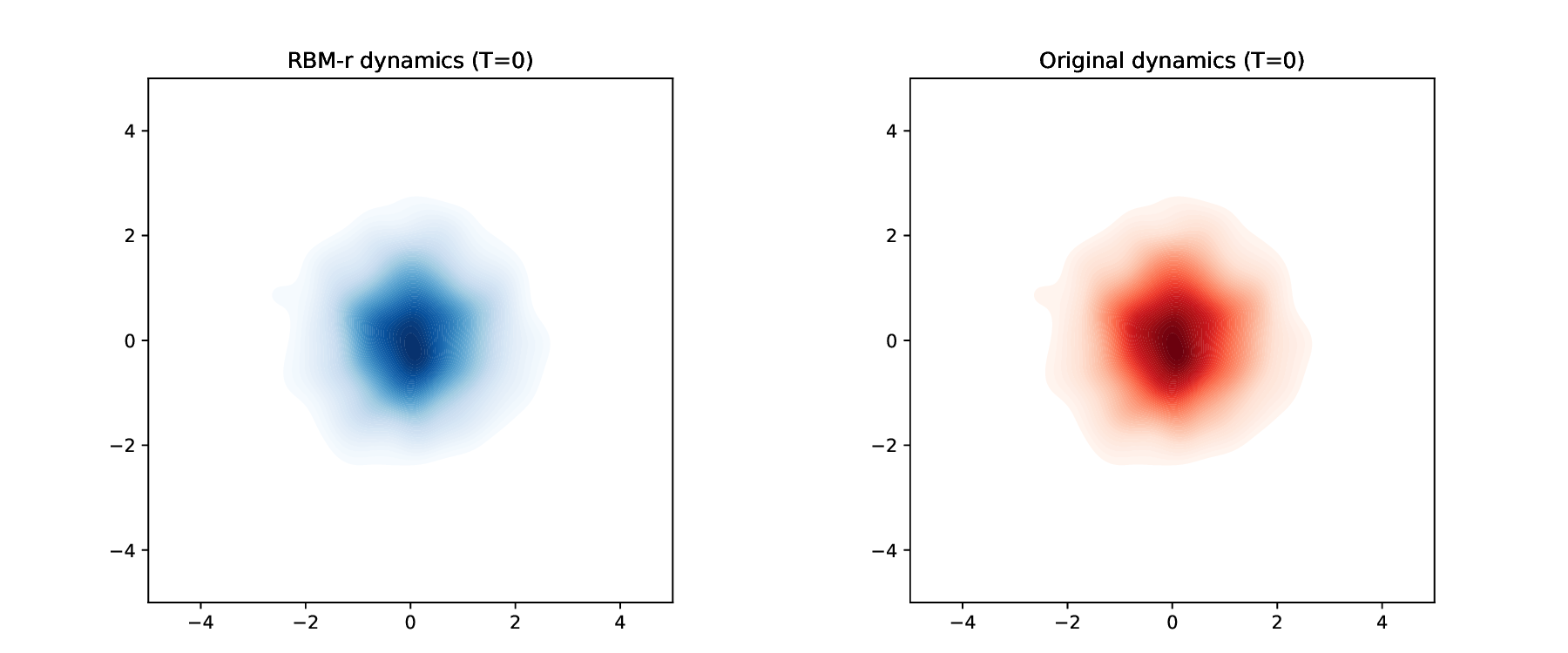}    

    \includegraphics[width=0.6\textwidth]{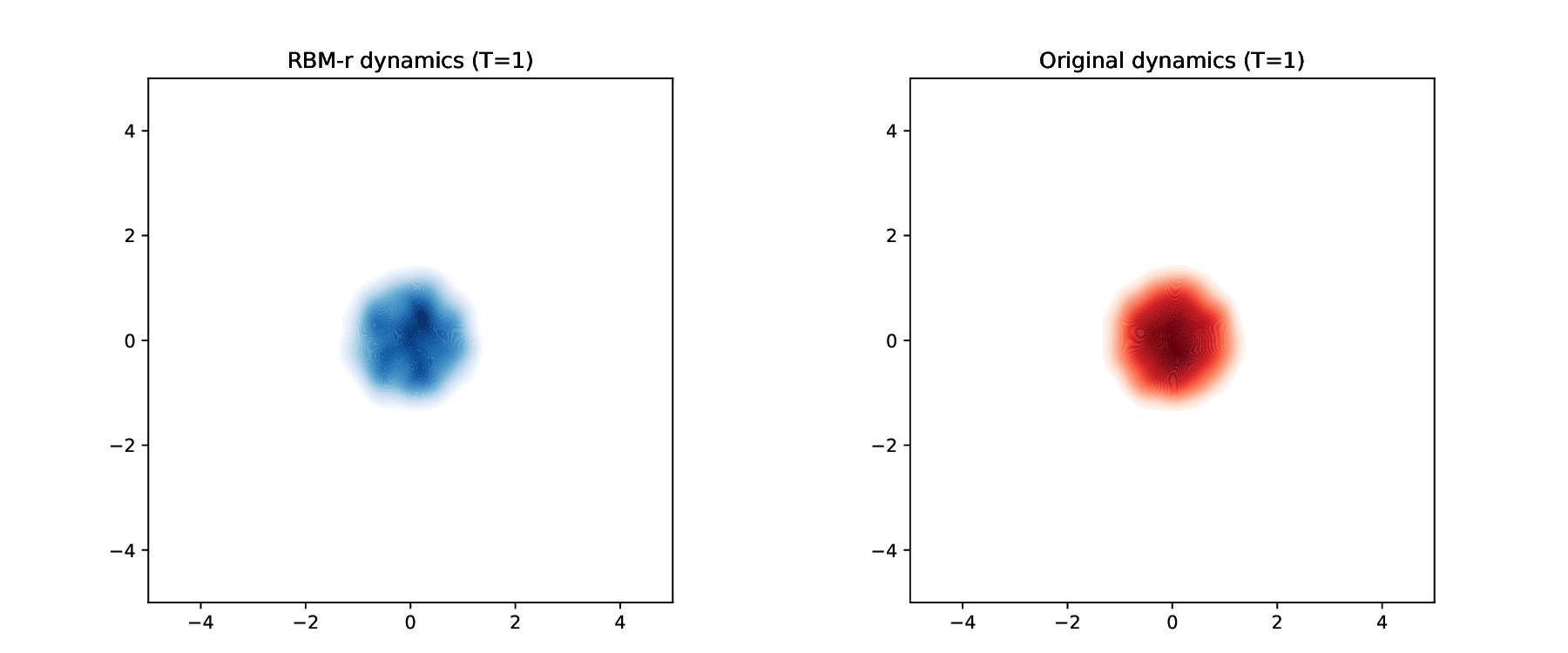}

    \includegraphics[width=0.6\textwidth]{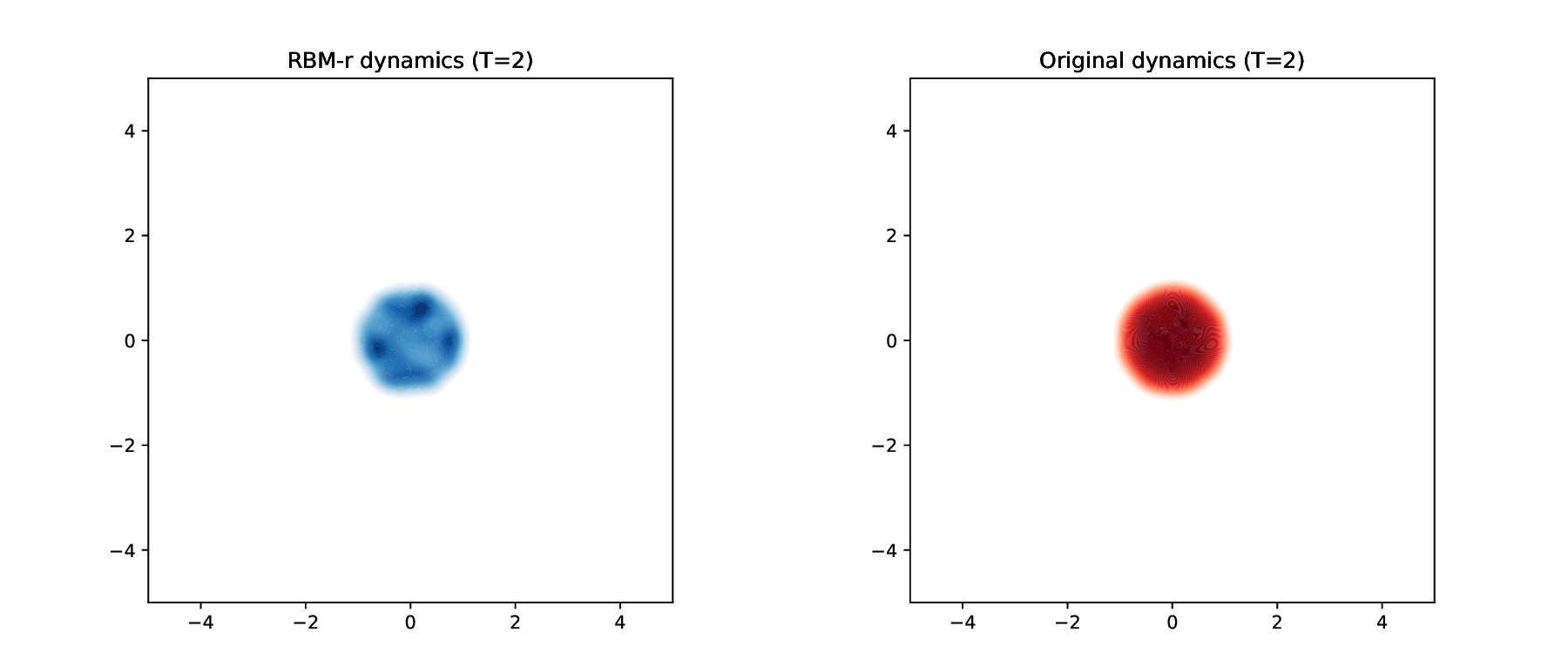}

\caption{Evolution of particle distributions of RBM-r dynamics and original dynamics}
\label{fig:distribution2}
\end{figure}




As a final remark, there is still room to improve the theoretical Wasserstein convergence rate. For instance, for the time-discrete RBM dynamics, it is possible to obtain an improved convergence rate under the relative entropy. To our knowledge, existing experiments on verifying the convergence rate is limited to the strong error, which is only a sufficient condition for the Wasserstein error. So here in this section our main focus is on the empirical observation of the  efficiency and the convergence property of the RBM-r algorithm. We discuss more on possible methods to improve the theoretical convergence results in the next section.

\section{Conclusion and discussions}\label{sec:concludion}

In this paper, we prove the convergence of the Random Batch Method with replacement (RBM-r) for interacting particle systems proposed in \cite{jin2020random}, which is the same as the kinetic Monte Carlo (KMC) for the pairwise interacting particle system. In detail, via a sequence of probabilistic analysis, we prove in Theorem \ref{eq:thm:mainthm} that RBM-1 converges to the associated interacting particle system under Wasserstein-2 distance with an explicit rate. An improved rate is obtained when there is no diffusion in the system.

One natural extension of our method is to study the kinetic Monte Carlo applied to other dynamics like the stochastic Ising spin system \cite{bortz1975new} as discussed in Section \ref{sec:intro}. Indeed, as mentioned in \cite{voter2007introduction}, although people focus more on the equilibrium state in most research on KMC, the KMC algorithm is actually simulating some associated dynamical system. To our knowledge, the convergence of the KMC method to this underlying dynamics is not well-studied. Using our method based on random time change and $L^2$ wea law of large number, it is possible to provide a rigorous proof of KMC for suitable given models other than the first-order pairwise interacting particle system defined in \eqref{eq:IPSintro0}.

Another possible issue is whether our analysis is valid for more general form of volatility and noise, such as the multiplicative noise and noises with L\'evy jumps. To our knowledge, existing experiments show that RBM can still effectively simulate the interacting particle systems when the noise is multiplicative. However, currently it is difficult to extend our analysis to the multiplicative noise case. This is because the current proof relies on the constructed synchronous coupling, where the Brownian motions are the same in the coupled dynamics. When $\sigma$ is constant, the noise formally cancels out (i.e. there is no diffusion terms in $Z^i$ defined in equation \eqref{eq:Zidef}), which leads to the desired convergence. However, we would lose this fact under the multiplicative noise setting, so the proof would be rather difficult using the current coupling method. However, we do believe that the rigorous convergence analysis of RBM-r applied to systems with L\'evy noises can be established. Recently, the rigorous convergence of RBM-1 to this system has been proved \cite{liu2024random}, and we leave the extension to RBM-r as possible future work.

Other possible future work includes looking for advanced analysis tools to improve the current convergence rate from $O(\kappa^{\frac{1}{4}})$ to $O(\kappa^{\frac{1}{2}})$, especially when the volatility $\sigma$ is not zero. The current proof framework (based on coupling) brings essential challenges to improve the convergence rate, because when one looks at the trajectories of the particles, the random time change is naturally induced by the pseudo-time concept. The pseudo-time comes from the allowance of replacement when picking random batches, which is directly due to the construction of the RBM-r algorithm. Now since the random time change is unavoidable under the current coupling-based analysis framework, one always need to estimate the difference of the solution of a differential equation at (random) different time points. This then leads to the current convergence rate. However, we believe that it is possible to improve the convergence rate by considering the distributions of the particle system instead of their trajectories, and making use of the relative entropy (or KL-divergence). Recent literature has shown that this alternative framework is able to bring additional benefit. For instance, improved convergence rates in terms of the relative entropy for time discretization of various SDE systems with random drifts have been established using this framework \cite{li2022sharp, huang2024mean}. Remarkably, one key initial step of this alternative framework is to establish the Fokker-Planck-type equation that describes the time evolution of the distribution of the system we study. Then, similar (nontrivial) techniques from \cite{li2022sharp, huang2024mean} may be applied to compute and estimate the time derivative of the relative entropy. We leave the detailed analysis as a nontrivial future work.

\section*{Acknowledgements}
The research of J.-G. L. is partially supported by the National Science Foundation (NSF) under Grant No. DMS-2106988. 
The authors would like to acknowledge the financial support from the National Key R\&D Program of China, Project Number 2020YFA0712902, which enabled Zhenhao's visit to Duke University.
The authors would like to thank the editors and the anonymous reviewers for helpful comments and suggestions.

\appendix

\section{Proofs of Lemmas in Section \ref{sec:lemmas}}\label{sec:append}
In this section, we provide proofs of Lemmas \ref{lmm:moment}, \ref{lmm:hatX_continuity}, and \ref{eq:L2LLN}.

\subsection{Proof of Lemma \ref{lmm:moment}}\label{sec:lmmmoment}
\begin{proof}
We first estimate the process $X^{i}(t)$. By It\^o's formula,
\begin{equation*}
    \begin{aligned}
\frac{d}{d t} \mathbb{E}\left|X^i\right|^q= & q \mathbb{E}\left|X^i\right|^{q-2}\Big(-X^i \cdot \nabla V\left(X^i\right)+\frac{1}{N-1} \sum_{j: j \neq i} X^i \cdot K\left(X^i-X^j\right)\Big) \\
& +\frac{1}{2} q(q+d-2) \sigma^2 \mathbb{E}\left|X^i\right|^{q-2} .
\end{aligned}
\end{equation*}
By $\lambda$-convexity of $V(\cdot)$ in Assumption \ref{ass}, we have
$$
X^i \cdot \nabla V\left(X^i\right)=\left(X^i-0\right) \cdot\left(\nabla V\left(X^i\right)-\nabla V(0)\right)+X^i \cdot \nabla V(0) \geq \lambda \left|X^i\right|^2+X^i \cdot \nabla V(0) .
$$
Therefore, since $K$ is bounded by Assumption \ref{ass},
\begin{equation}\label{eq:momentforX}
\frac{d}{d t} \mathbb{E}\left|X^i\right|^q \leq-q \lambda \mathbb{E}\left|X^i\right|^q+q\left(\|K\|_{\infty}+|\nabla V(0)|\right) \mathbb{E}\left|X^i\right|^{q-1}+\frac{1}{2} q(q+d-2) \sigma^2 \mathbb{E}\left|X^i\right|^{q-2} .
\end{equation}
For the second term on the right-hand side of \eqref{eq:momentforX}, by Young's inequality, for any $\nu>0$,
$$
\mathbb{E}\left|X^i\right|^{q-1} \leq \frac{(q-1) \nu}{q} \mathbb{E}\left|X^i\right|^q+\frac{1}{q \nu^{q-1}}.
$$
For the last term on the right hand side of \eqref{eq:momentforX}, if $q=2$, it is bounded by $\frac{1}{2}q(q+d-2)$; otherwise, we can use Young's inequality to obtain a similar control for $\mathbb{E}\left|X^i \right|^{q-2}$: for any $\tilde{\nu}>0$,
\begin{equation*}
    \mathbb{E}\left|X^i \right|^{q-2} \leq \tilde{\nu}\mathbb{E}\left|X^i \right|^q + \frac{2}{q-2}\left(\frac{q-2}{q} \right)^{\frac{q}{2}} \tilde{\nu}^{-\frac{q-2}{2}}.
\end{equation*}
Consequently, by choosing small $\nu$ and $\tilde{\nu}$, we have
\begin{equation*}
    \frac{d}{d t} \mathbb{E}\left|X^i\right|^q \leq -C_1 \mathbb{E}\left|X^i\right|^q + C_2
\end{equation*}
for some $C_1$, $C_2 > 0$ independent of $t$, $N$, $p$, $i$. By Gr\"onwall's inequality, we conclude that
\begin{equation}\label{eq:finalmomentX}
    \sup_{1\leq i \leq N}\sup _{t \geq 0}\mathbb{E}\left|X^{i}(t)\right|^q \leq C_q^{(1)},
\end{equation}
where $C_q^{(1)}$ is a positive constant independent of $i$, $N$, $p$. As a direct corollary of \eqref{eq:finalmomentX}, due to the random time change relation (recall the definition for $\tau_n^i$ in \eqref{eq:defmni})
\begin{equation*}
    \hat{X}^{\ell,i}(\tau_n^i \kappa + t) = X^\ell(n\kappa + t), \quad \forall t\in [0,\kappa), \quad n = 0,1,2,\dots,\quad 1\leq \ell \leq N,
\end{equation*}
and since the random batch is independent of the Brownian motion (so the stopping time $\tau_n^i$ is independent of $\hat{X}$), we obtain the moment control for $\hat{X}^{\ell,i}$:
\begin{equation}
    \sup_{1\leq l,i \leq N}\sup _{t \geq 0}\mathbb{E}\left|\hat{X}^{\ell,i}(t)\right|^q \leq C_q^{(1)},   
\end{equation}
due to the random time change relation (recall the definition for $\tau_n^i$ in \eqref{eq:defmni})
\begin{equation*}
    \hat{X}^{\ell,i}(\tau_n^i \kappa + t) = X^\ell(n\kappa + t), \quad \forall t\in [0,\kappa), \quad n = 0,1,2,\dots,\quad 1\leq \ell \leq N.
\end{equation*}

Next, we estimate the process $\tilde{X}^i(t)$. Fix $i$. For $k$ such that $i \notin \mathcal{C}_k$ and $t \in [t_k,t_{k+1})$,
\begin{equation*}
    \frac{d}{dt}\mathbb{E}\left|\tilde{X}^i \right|^q = 0.
\end{equation*}
For $k$ such that $i \in \mathcal{C}_k$ and $t \in [t_k,t_{k+1})$,
\begin{equation*}
    \begin{aligned}
\frac{d}{d t} \mathbb{E}\left[\left|\tilde{X}^i\right|^q\mid \mathcal{F}_k\right]= & q \mathbb{E}\Big[\left|\tilde{X}^i\right|^{q-2}\Big(-\tilde{X}^i \cdot \nabla V\left(\tilde{X}^i\right)+\frac{1}{p-1} \sum_{j: j \neq i, j\in \mathcal{C}_k} \tilde{X}^i \cdot K\left(\tilde{X}^j-\tilde{X}^i\right)\Big)\mid \mathcal{F}_k\Big] \\
& +\frac{1}{2} q(q+d-2) \sigma^2 \mathbb{E}\left[\left|\tilde{X}^i\right|^{q-2}\Big| \mathcal{F}_k\right],
\end{aligned}
\end{equation*}
where $\mathcal{F}_k := \sigma\left(\tilde{X}^i(0), \tilde{B}^i(s), C_{k'}: s\leq t_k, k' \leq k, 1\leq i\leq N  \right)$. Using the similar estimates as for $X^i(t)$, we have
\begin{equation}\label{eq:importantinmoment}
    \frac{d}{dt}\mathbb{E}\left[\left|\tilde{X}^{i}\right|^q \mid \mathcal{F}_k\right]\leq -C_1' \mathbb{E}\left[\left|\tilde{X}^{i}\right|^q \mid \mathcal{F}_k\right] + C_2'
\end{equation}
for some $C_1'$, $C_2' > 0$ independent of $t$, $N$, $p$, $i$, $k$. Note that compared with the estimate for the process $X^i$, the only difference in deriving \eqref{eq:importantinmoment} is that, the summation $\frac{1}{N-1}\sum_{j\neq i}K(X^j - X^i)$ is replaced by $\frac{1}{p-1}\sum_{j\neq i,j\in \mathcal{C}_k}K(\tilde{X}^j - \tilde{X}^i)$. However, since we only use the boundedness of $K(\cdot)$ from Assumption \ref{ass}, their bounds are exactly the same. Now taking expectation about the randomness of $\mathcal{F}_k$ on both sides and using Gr\"onwall's inequality, we obtain
\begin{equation}
    \sup_{1\leq i \leq N}\sup _{t \geq 0}\mathbb{E}\left|\tilde{X}^i(t)\right|^q \leq C_q^{(2)},
\end{equation}
where $C_q^{(2)}$ is a positive constant independent of $i$, $N$, $p$. 

\end{proof}

\subsection{Proof of Lemma \ref{lmm:hatX_continuity}}\label{sec:lmmconti}

\begin{proof}

For the process $X$, using Assumption \ref{ass} and moment control in Lemma \ref{lmm:moment}, we have
\begin{equation*}
\begin{aligned}
    &\quad\mathbb{E}\left|X^\ell(t_2) - X^\ell(t_1) \right|^2\\
    &=\mathbb{E}\Big|-\int_{t_1}^{t_2}  \nabla V(X^l(s)) ds + \int_{t_1}^{t_2} \frac{1}{N-1}\sum_{j\neq \ell} K(X^j(s) - X^\ell(s)) ds + \sigma \int_{t_1}^{t_2}dB^\ell\Big|^2\\
    &\leq (t_2-t_1)\mathbb{E}\int_{t_1}^{t_2}\left(1 + |X^\ell(s)|^{2q} \right)ds + C' (t_2 - t_1)^2 + 3\sigma^2(t_2-t_1)\\
    &\leq C(t_2-t_1)^2 + 3\sigma^2(t_2-t_1)
\end{aligned}
\end{equation*}
for some positive $C'$, $C$ independent of $\ell$, $k$, $t_1$, $t_2$, $N$, $p$, $\sigma$. Note that we have used the polynomial growth condition for $\nabla V(\cdot)$ and boundedness for $K(\cdot)$ in assumption \ref{ass} in the first inequality, and the uniform-in-time moment bound in Lemma \ref{lmm:moment} has been applied for the second inequality above.

For the process $Z$, denote
\begin{equation}\label{eq:mathcalA}
    \mathcal{A} := \left(\cup_{k:i \in \mathcal{C}_k}[t_k,t_{k+1})\right)\cap[t_1,t_2],
\end{equation}
and recall that when $i \in \mathcal{C}_k$,
\begin{multline*}
    dZ^i(s) = - \left(\nabla V(\tilde{X}^i(s)) - \nabla V(\hat{X}^{i,i}(s)) \right) ds\\
    + \frac{1}{p-1}\sum_{j\in \mathcal{C}_k,j\neq i}K\left(\tilde{X}^j(s) - \tilde{X}^i(s) \right) ds - \frac{1}{N-1}\sum_{j\neq i}K\left(\hat{X}^{j,i}(s) - \hat{X}^{i,i}(s) \right) ds.
\end{multline*}
Similarly, since $K(\cdot)$ is bounded and $\nabla V(\cdot)$ grows at most polynomially by Assumption \ref{ass}, and using the uniform moment bound obtained in Lemma \ref{lmm:moment}, we have
\begin{equation*}
\begin{aligned}
    &\quad\mathbb{E}\left|Z^i(t_2) - Z^i(t_1) \right|^2\\
    &=\mathbb{E}\Big|-\int_{t_1}^{t_2} \textbf{1}_{\{s\in\mathcal{A}\}} \left(\nabla V(\tilde{X}^i(s))-\nabla V(\hat{X}^{i,i}(s))\right) ds\\
    &\quad+ \int_{t_1}^{t_2} \textbf{1}_{\{s\in\mathcal{A}\}} \left(\frac{1}{p-1}\sum_{j\in \mathcal{C}_k, j\neq i} K(\tilde{X}^j(s) - \tilde{X}^i(s)) - \frac{1}{N-1}\sum_{j\neq i} K(\hat{X}^{j,i}(s) - \hat{X}^{i,i}(s)) \right) ds \Big|^2\\
    &\leq (t_2-t_1)\mathbb{E}\int_{t_1}^{t_2}\left(1 + |\tilde{X}^i(s)|^{2q} + |\hat{X}^{i,i}(s)|^{2q}\right)ds + C' (t_2 - t_1)^2\\
    &\leq C(t_2-t_1)^2 
\end{aligned}
\end{equation*}
for some positive $C'$, $C$ independent of $i$, $\ell$, $k$, $t_1$, $t_2$, $N$, $p$, $\sigma$.

\end{proof}

\subsection{Proof of Lemma \ref{eq:L2LLN}}\label{sec:lmmlln}
\begin{proof}
Let $\tilde{\tau}_k^i := \tau_k^i - \tau_{k-1}^i$ for $k = 0,1,2,\dots$ (define $\tau_{-1}^i = 0$). Clearly, $\tilde{\tau}_k-1$ is the number of intervals that the particle with index $i$ remains outside the batch since the last chosen time. Consequently, the sequence $(\tilde{\tau}_k)_k$ are i.i.d. satisfying a geometric distribution
\begin{equation*}
    P(\tilde{\tau}_k = j) = \left(1 - \tfrac{p}{N} \right)^{j-1}\tfrac{p}{N}, \quad j = 1,2,\dots
\end{equation*}
with expectation $\mathbb{E}\tilde{\tau}_k^i = \tfrac{N}{p}$ and variance $Var(\tilde{\tau}_k^i) = \tfrac{N^2}{p^2} - \tfrac{N}{p}$.
Therefore, 
\begin{equation*}
\begin{aligned}
    \mathbb{E}\left|\kappa \tau_{n_t}^i - \tfrac{N}{p}t \right|^2 &= t^2\mathbb{E}\left|\frac{1}{n_t} \sum_{n=0}^{n_t} \left(\tilde{\tau}_n^i - \tfrac{N}{p}\right) \right|^2\\
    &= \kappa^2 \sum_{n=0}^{n_t} \mathbb{E}\left|\tilde{\tau}_n^i - \tfrac{N}{p} \right|^2 = Var(\tilde{\tau}_0^i)\kappa^2\left(t/\kappa + 1 \right).
\end{aligned}
\end{equation*}
\end{proof}

\bibliographystyle{plain}
\bibliography{main}



\end{document}